\numberwithin{equation}{section}
\newcommand{\III}{{]{-}\infty,0[}}
\renewcommand{\ti}{{\times}}
\newcommand{\rhoo}{\rho_{\mathrm{memb}}}
\newcommand{\rhooo}{\rho_{\mathrm{bulk}}}
\newcommand{\kkk}{\kappa}
\newcommand{\llla}{\big\langle\!\!\big\langle}
\newcommand{\rrra}{\big\rangle\!\!\big\rangle}
\newcommand{\norm}{\bm|\!\!\,\bm|\!\!\,\bm|}
\def\Dot#1{\overset{\text{\LARGE.}}{#1}}
\def\DDot#1{\overset{\text{\LARGE.\hspace{-0.1em}.}}{#1}}
\begin{document}

\title{A rigorous derivation and energetics
       of\\ a wave equation with fractional damping%
 \thanks{The research was
   partially supported by Deutsche Forschungsgemeinschaft through
   SFB\,1114 \emph{Scaling Cascades in Complex Systems}, Project Number 235221301, 
    via the
   subprojects C02 \emph{Interface Dynamics: Bridging Stochastic and
     Hydrodynamics Descriptions}
   and C05 \emph{Effective Models for Materials and Interfaces
  with Multiple Scales}.}}

\author{Alexander Mielke\thanks{Weierstraß-Institut f\"ur Angewandte
    Analysis und Stochastik, Mohrenstr.\,39, 10117 Berlin und Humboldt
  Universit\"at zu Berlin, Institut f\"ur Mathematik, Berlin,
  Germany.}, Roland R. Netz\thanks{Freie Universität Berlin, Fachbereich
 Physik, Berlin, Germany.}, and Sina Zendehroud$^\ddag$}

\date{24.\ April 2020}

\maketitle

\centerline{\itshape Dedicated to Matthias Hieber on the occasion to his
  sixtieth birthday}

\begin{abstract}
We consider a linear system that consists of a linear wave equation on a
horizontal hypersurface and a parabolic equation in the half space
below. The model describes longitudinal elastic  waves in organic monolayers 
at the water-air interface, which is an experimental setup that is relevant
for understanding wave propagation in biological membranes.
We study the scaling regime where the relevant horizontal
length scale is much larger than the vertical length scale and provide
a rigorous limit leading to a fractionally-damped wave equation for the
membrane. We provide the associated existence results via linear
semigroup theory and show convergence of the solutions in the scaling
limit. Moreover, based on the energy-dissipation structure for the
full model, we derive a natural energy and a natural dissipation
function for the fractionally-damped wave equation with a time
derivative of order 3/2.\smallskip

\noindent
\emph{Keywords:}  bulk-interface coupling, surface waves,
energy-dissipation balance, fractional derivatives, convergence of
semigroups, parabolic
Dirichlet-to-Neumann map.\smallskip

\noindent
\emph{MOS:}
35Q35  % PDEs in connection with fluid mechanics
35Q74  % PDEs in connection with mechanics of deformable solids
74J15  % Surface waves in solid mechanics
\end{abstract}

\section{Introduction}
\label{se:intro}

This work is stimulated by the physical models investigated in
\cite{KSSN17NFWE, KSSN17NFWEs}, where longitudinal elastic waves of a
membrane are coupled to viscous fluid flow in the enclosing half
space. The aims are to understand the damping of the elastic
waves through the coupling to the viscous fluid, on the one hand,
and to explain the appearance of the non-classical dispersion relation, on the
other hand. Denoting by $k \in \R^{d-1}$ the horizontal wave vector and
$\omega\in \R$ the angular frequency, the classical elastic wave satisfies a dispersion
relation $\omega^2 \approx |k|^2$, while the longitudinal
pressure waves, here referred to as Lucassen waves (cf.\ \cite{Luca68LCW1}), 
satisfy $|\omega|^{3/2} \approx |k|^2$  such that the wave speed
$c(k)=\omega(k)/|k|$ depends on $k$.

The class of Lucassen waves attracted considerable attention over the
last decade due to its biophysical relevance in living organisms,
where the transmission of information over biologically relevant
distances and time scales is fundamental. The standard model
describes the propagation of signals on the vast network of nerve
cells via a purely electrical mechanism, unable to explain a number of
nonelectric phenomena, like the effectiveness of anesthetics scaling
with their solubility in lipid membranes \cite{Meye99TA, Over01SNBA}
or the lower heat dissipation of a nerve in contrast to an electrical
cable \cite{Tasa92HPAP}. As it is known that, alongside the electrical
signal, a mechanical displacement travels along the nerve fiber
\cite{KKOS07MSAA, ElhMac15MSWA}, there is a need for a more complete
model incorporating these aspects. On the one hand, experimental
scientists (see \cite{GBWS12P2DP, ShrSch14ETDS}), using a lipid
monolayer spread at the air-water interface as a minimal model, have
shown that indeed pressure waves can propagate in such systems. On the
other hand, from a more theoretical viewpoint, all possible
surface-wave solutions for a visco-elastic membrane atop a half space
of viscous fluid have been determined in \cite{KapNet15MSWS},
including the experimentally observed Lucassen waves and their
dispersion relations of the type $|\omega|^{3/2}\approx |k|^2$. In
particular, a fractionally-damped wave equation was derived for
describing the Lucassen waves efficiently in \cite{KSSN17NFWE,
  KSSN17NFWEs}.  The biophysical relevance of Lucassen wave is
demonstrated by the fact that the wave solutions depend directly on
the lateral membrane compressibility $\kkk$.  For example, adsorption
of lipophilic substances, like anesthetics, into the membrane
presumably alter $\kkk$ and, as a consequence, are expected to change
the wave propagation properties.  In addition to that, at large
amplitudes, the pressure pulse locally modifies the compressibility
$\kkk$ and thereby significantly increases the propagation distance
\cite{ShrSch14ETDS}.  This non-linear property suggests an all-or-none
behavior, which indeed is observed in nerve pulse propagation.

Here we want to understand this phenomenon using the mathematically most
simple model, which is given by the following coupled system:
\begin{subequations}
\label{eq:System}
\begin{align}
&\label{eq:System.a}
  \rhoo \DDot{U}= \kkk \Delta_x U - \mu \pl_z v &&\text{for } t>0, \ x\in \Sigma,\\
&\label{eq:System.b}
  \Dot U(t,x)= v(t,x,0)&& \text{for } t>0, \ x\in \Sigma,\\
& \label{eq:System.c}
 \rhooo \Dot v = \mu \Delta_{x,z} v &&\text{for } t>0,\ (x,z)\in
 \Omega:=\Sigma {\ti} {]{-}\infty,0[}.
\end{align}
\end{subequations}
In the physical setup of \cite{KSSN17NFWE, KSSN17NFWEs} the domain
$\Sigma = \R^1$ denotes the membrane and $U(t,x)\in \R$ denotes
the horizontal  displacement (longitudinal motion) of the
membrane. The half space $\Omega = \Sigma \ti \III \subset \R^d$ is filled by a
viscous fluid whose horizontal velocity component is $v(t,x,z) \in
\R$ (pure shear flow). Condition \eqref{eq:System.b} is a no-slip
condition for the fluid along the membrane, while the induced stress
of the sheared fluid is included in \eqref{eq:System.a} via ${-} \mu
\pl_z v(t,x,0)$.

For mathematical purposes we can allow $\Sigma \subset \R^{d-1}$, but to
avoid any complications with boundary conditions we assume that
$\Sigma$ is of the form
\begin{equation}
  \label{eq:Sigma.Form}
    \Sigma = \R^k \ti \big(\R_{\!/(\ell\Z)}\big)^n \quad \text{ with } \ell>0
  \text{ and } k+n=d{-}1.
\end{equation}
In particular, $\Sigma$ is an additive group and \eqref{eq:System} is
translation invariant. In Section \ref{se:Model} we first show that
the system has the natural energy
\begin{equation}
  \label{eq:I.Energy}
    \calE(U,\Dot U,v)= \int_\Sigma\Big\{\frac\rhoo2 \Dot U{}^2 + \frac\kkk2
  |\nabla U|^2\Big\} \dd x + \int_\Omega \frac\rhooo2 v^2 \dd z \dd x
\end{equation}
as a Lyapunov function. This shows that the function space
$\bfH:= \rmH^1(\Sigma)\ti \rmL^2(\Sigma) \ti \rmL^2(\Omega)$  is the
natural state space. Note that this includes periodic boundary
conditions for $x \in \Sigma = \R^k \ti \big(\R_{\!/(\ell\Z)}\big)^n$.

Moreover, we discuss suitable scalings of time
$t$, the horizontal variable $x\in \R^{d-1}$, and the vertical variable $z\in
\III $. We can renormalize all constants such that the system of equations  takes
the form
\begin{subequations}
\label{eqI:SystemN}
\begin{align}
&\label{eqI:SystemN.a}
  \DDot U= \Delta_x U -  \pl_zv|_{z=0} &&\text{for } t>0, \ x\in \Sigma,\\
&\label{eqI:SystemN.b}
  \Dot U= v|_{z=0}&& \text{for } t>0, \ x\in \Sigma,\\
& \label{eqI:SystemN.c}
  \Dot v = \eps^2 \Delta_x v + \pl_z^2 v &&\text{for } t>0,\ (x,z)\in
 \Omega,
\end{align}
\end{subequations}
with the parameter $\eps = \mu/\sqrt{\rhoo k\,}$\,. The essential point is
here that the scaling of the horizontal variable $x\in \R^{d-1}$ is different from
the vertical variable $z \in \III$, thus breaking the isotropy of the
diffusion $\mu\Delta_{x,z} v$ in \eqref{eq:System.c}.

Our interest lies in the case $\eps\to 0$. Indeed,
in Section \ref{su:LimitMOdel} we simply set $\eps=0$ in
\eqref{eqI:SystemN.c} and show that this limit allows us to solve the
scalar one-dimensional (!!) heat equation $\Dot v = \pl_z^2 v$ on
$\III$ for each $x\in \Sigma$ independently. Assuming the initial
condition $v(0,x,z)=0$ and using the Dirichlet boundary condition
$v(t,x,0)=\Phi(t,x)$, the stress $\pl_z v(t,x,0)$ can be explicitly
expressed via the heat kernel, namely
\begin{equation}
  \label{eq:I.parDtN}
    \pl_z v(t,x,0)= \int_0^t \frac1{\sqrt{\pi(t{-}\tau)}}
  \: \Dot \Phi(\tau,x) \dd \tau,
\end{equation}
see \eqref{eq:DiffHL4}. It is this one-dimensional parabolic
Dirichlet-to-Neumann map that introduces the fractional damping into
the wave equation. In particular, denoting the fractional (Caputo)
derivative of order $\alpha\in {]0,1[}$ of the function $g$ with
$g(0)=0$ by
\begin{equation}
  \label{eq:Caputo}
    {}^\rmC\rmD^\alpha g: \ t \ \mapsto \ \  \frac1{\Gamma(1{-}\alpha)} \int_0^1
  \frac1{(t{-}\tau)^\alpha} \: \Dot g(\tau)\dd \tau,
\end{equation}
we see that the mapping in \eqref{eq:I.parDtN} takes the form $\pl_z
v(t,x,0)= (\,{}^\rmC\rmD^{1/2} \Phi)(t,x)$, which is a Caputo derivative of order
$1/2$.

Indeed, if we solve \eqref{eqI:SystemN} with $\eps=0$ and
$v(0,x,z)=0$, then we can eliminate $v$ totally by exploiting
\eqref{eq:I.parDtN} with $\Phi=\Dot U$,  and $U$ has to solve
\begin{equation}
  \label{eq:I.FWE}
    \DDot U(t,x) + \int_0^t \frac1{\sqrt{\pi(t{-}\tau)}}
  \: \DDot U(\tau,x) \dd \tau  = \Delta_x U(t,x) \quad \text{for } t>0,
  \ x\in \Sigma.
\end{equation}
This is a fractionally-damped wave equation where the damping is
generated by a fractional Caputo derivative of order $3/2$, and this
fractional derivative acts locally with respect to the space variable $x \in
\Sigma$.
%\TODO{Check for initial compatibility $\dot U(0,\cdot)=0$ ?? \quad How
% can we avoid this?!?!!}

In Section \ref{su:Dispersion} we follow the approach in
\cite{KapNet15MSWS, KSSN17NFWE, KSSN17NFWE} and discuss the dispersion
relations for our normalized system \eqref{eqI:SystemN} and show that,
for the limit case $\eps=0$, the dispersion relation reads $(\ii \omega)^2 +
(\ii \omega)^{3/2} + |k|^2=0$, where $\Im \omega\geq 0$ is enforced by
the stability through the Lyapunov function $\calE$. Hence, for small
$|k|$ we obtain the new dispersion relation
\[
  \omega(k)=\Big(\pm \frac{\sqrt3}2 + \frac\ii 2 \Big) \,|k|^{4/3}
  \ + \ O(|k|^2)_{k\to 0}.
\]

There is a rich mathematical literature on linear and nonlinear
partial differential equations involving fractional time derivatives,
see e.g.\ \cite{VerZac08LFCS, VerZac10ABDS, PrVeZa10WLBN, KSVZ16DETF,
  VerZac17SIBT, Akag19FFDS}. Our focus is different, because we want to
show that \eqref{eq:I.FWE} appears as a rigorous limit for
$\eps\to 0^+$ in \eqref{eqI:SystemN}. For this, in Section
\ref{se:ConvergenceSG} we develop the linear semigroup theory by
showing that the semigroups $\ee^{t A_\eps}:\bfH \to \bfH$ exist for all
$\eps\geq 0$ and are bounded in norm by $C(1{+}t)$.
In Theorem \ref{th:StrongCvgSol} we establish the strong convergence
$\ee^{t A_\eps} w_0 \to \ee^{t A_0} w_0$ for $\eps\to 0^+$, which holds
for all $t>0$ and $w_0 \in \bfH$.  For more regular initial conditions,
$w_0$ we obtain the quantitative estimate
\[
  \| \ee^{t A_\eps} w_0 - \ee^{t A_0} w_0\|_\bfH \leq \sqrt{\eps \,t}\:
  (2.3{+}t)^2 \: \big( \|w_0\|_{\bfH} + \|\nabla_x w_0\|_{\bfH}\big).
\]

In Section \ref{se:EnergyDiss} we return to the energetics and the
dissipation for the damped wave equation. By starting from the natural
energy and dissipation in the PDE system \eqref{eqI:SystemN} with
$\eps=0$ and the explicit solution for $v(t,x,z)$ in terms of $\Dot
U(\tau,x)$, we obtain a natural energy functional $\bfE$ for the
fractionally-damped wave equation that is non-local in time:
\begin{subequations}
  \label{eq:I.EnergDiss}
\begin{align}
  \bfE(U(t), \big[\Dot U(\cdot)\big]_{[0,t]}) = \int_\Sigma
  \Big\{ &\frac12\Dot U(t,x)^2 + \frac12|\nabla U(t,x)|^2 \nonumber
  \\
  & +\int_0^t\!\int_0^t \!
  \frac1{4\sqrt\pi(2t{-}r{-}s)^{3/2}} \:\Dot U(r,x) \Dot
  U(s,x) \dd s \dd r \Big\} \dd x ,
\end{align}
where $\big[\Dot U(\cdot)\big]_{[0,t]}$ indicates the dependence on
$\Dot U(s)$ for $s\in [0,t]$.  For solutions $U$ of \eqref{eq:I.FWE},
we obtain an energy--dissipation balance with a non-local dissipation:
\begin{align}
  \frac{\rmd}{\rmd t} \bfE(U(t), \big[\Dot U(\cdot)\big]_{[0,t]})  = -
  \int_\Sigma \int_0^t\int_0^t \frac1{\sqrt\pi(2t{-}r{-}s)^{1/2}}
  \:\DDot U(r,x) \DDot   U(s,x) \dd s \dd r \: \dd x.
\end{align}
\end{subequations}
Related results are obtained in
\cite{VerZac08LFCS, GYCCEO15MMSD, VerZac15ODET}, but there typically
only energy-dissipation inequalities are derived. It is
surprising to see that the two non-local kernels   in
\eqref{eq:I.EnergDiss} that depend on
$t{-}r,\ t{-}s\in [0,T]$ are only depending on the sum $(t{-}r) +
(t{-}s) = 2t{-}r{-}s$, which derives from very specific scaling
properties of the heat kernel.

\section{The formal modeling}
\label{se:Model}

In this section we describe the formal modeling, including the energy
functional, the scalings and the derivation of the fractionally-damped
wave equation as the scaling limit.

\subsection{The energy functional and the state space}
\label{su:LinModel}

We return to the full system \eqref{eq:System} and observe that it has
the form of a damped Hamiltonian system with the total energy
$\calE(U,\Dot U, v)$ given in \eqref{eq:I.Energy}.  Indeed, taking the
time derivative along solutions $t\mapsto (U(t),v(t))$ of
\eqref{eq:System} we find
\begin{align*}
&\frac{\rmd}{\rmd t} \calE(U(t),\Dot U(t), v(t)) = \int_\Sigma \big(\rhoo
 \Dot U  \DDot U +\kkk \nabla_{\!x} \Dot U \cdot\nabla_{\!x} U \big)
                 \dd x + \int_\Omega
\rhooo v\,\Dot v  \dd x \dd z \\
&\overset{\text{\eqref{eq:System.a},\eqref{eq:System.c}}}
=\int_\Sigma U_t \mu v_z \dd x +
 \int_\Omega  \mu \,v\,\Delta_{x,z} v \dd x  \dd z
 \overset{\text{\eqref{eq:System.b}}}=
- \int_\Omega  \mu ( |\nabla_{\!x} v|^2 {+}v_z^2) \dd x \dd z \leq 0.
\end{align*}
Here we used that the integration by parts $\int_\Sigma   \nabla_{\!x}
\Dot U \cdot\nabla_{\!x} U \dd x = - \int_\Sigma \Dot U \Delta_x U\dd x $
does not generate boundary terms because $\Sigma $ has the form
\eqref{eq:Sigma.Form}.

Thus, $\calE$ acts as a Lyapunov function and it is a bounded
quadratic form on the Hilbert space $\bfH= \rmH^1(\Sigma)\ti \rmL^2(\Sigma)\ti
\rmL^2(\Omega)$, which we consider as the basic state space for our
problem.  In Section
\ref{se:ConvergenceSG} we will show that \eqref{eq:System} has a
unique solution for each initial value $w_0=(U(0),\dot U(0),v(0)) \in
\bfH$.

More precisely, the system \eqref{eq:System} can be written as a damped
Hamiltonian system for the states $X=(U,P,p)$ where $P=\rhoo \Dot
U$ and $p=\rhooo v$. With $\calH(U,P,p)=\calE(U,\frac1\rhoo
P,\frac1{\rhooo} p)$ we have
\[
  \left(\!\!\ba{c}\Dot U\\ \Dot P\\ \Dot p \ea\!\!\right) = 
  \big( \mathbb J {-} \mathbb
  K\big) \rmD \calH(U,P,p) \ 
  \text{with }\mathbb J=\bma{ccc}0&I&0\\\!\!\!{-}I&0&0\\ 0&0&0 \ema
  \text{ and }\mathbb K =\bma{ccc}0&0&0\\0&*&\mu\pl_z\Box|_{z=0}\!\!\\
  0&*&{-}\mu\Delta_{x,z}\! \ema\!\!,
\]
where $\mathbb K: \mafo{dom}(\mathbb K) \subset \bfX\to
\bfX:=\rmL^2(\Sigma) \ti \rmL^2(\Sigma)\ti \rmL^2(\Omega)$ is defined
as the
self-adjoint, unbounded operator induced by the quadratic dissipation
potential
\[
  \calR^*(\Pi,\Xi,\xi):=\frac\mu2 \int_\Omega
  |\nabla_{x,z}\xi|^2 \dd x \dd z + \chi_*(\Xi,\xi), \quad
  \text{ with }\chi_*(\Xi,\xi)= \left\{\ba{cl} 0 & \text{if
    }\Xi=\xi|_{z=0}, \\ \infty &\text{else}. \ea \right.
\]

\subsection{A long-wave scaling}
\label{su:LongWaveSc}

To obtain a first understanding of the different scaling of horizontal
and vertical spatial variables,  we study the long-wave scaling for the
wave equation. This mean that we scale the horizontal space variable
$x$ and the time variable $t$ with the same factor $\delta>0$. For the moment we
assume that the membrane constants $\rhoo$ and $k$ are given and of
order 1, while $\rhooo$ and $\mu$ are much smaller. More general
scalings are discussed in the following subsection.

Without loss of generality we keep $U$ fixed and  obtain velocities
$\Dot U$ of order $\delta$. Hence, to keep the no-slip condition, we also
need to rescale $v$ by a factor $\delta$. The main point is that we want
$z$ to be rescaled by a smaller factor, let us say $\delta^\alpha$ with
$\alpha \in {[0,1[}$. This implies that $\pl_z v$ scales like
$\delta^{1+\alpha}$. Thus, to treat the coupling term
$\mu\pl_z v|_{z=0}$ of the same order as $\DDot U$ and $\nabla_x U$, we
need to assume that $\mu$ also scales with $\delta$, namely like
$\delta^{1-\alpha}$. Finally, we also assume the appropriate scaling for
$\rhooo$, namely
\[
\hat x = \delta \,x, \quad \hat t = \delta\, t, \quad \hat z = \delta^\alpha\, z,
\quad \hat U = U,\quad  v = \delta\, \hat v , \quad
 \mu = \delta^{1-\alpha} \hat\mu, \quad
\rhooo =\delta^\alpha \hat{\ }\hspace{-1ex}\rhooo .
\]
Hence, this long-wave scaling with small $\delta$ is indeed suitable, if the bulk
quantities $\rhooo$ and $\mu$ are much smaller than the membrane
quantities $\rhoo$ and stiffness $k$.

Inserting these scalings (and dropping the hats)
we find the transformed system
%\begin{subequations}
%\label{eq:ScSystem}
\begin{align}
\label{eq:ScSystem}
  \begin{aligned}
                  &\rhoo \DDot U= \kkk \Delta_x U - \mu \pl_z v|_{z=0}
  \text{ and }
  \Dot U= v|_{z=0} &\text{ on }&\Sigma,
  \\
  &  \rhooo \Dot v = \mu (\delta^{2{-}2\alpha} \Delta_x {+} \pl_z^2)v
     &\text{ in } &\Omega.
 \end{aligned}
\end{align}
Here, the case of small $\delta$ is relevant, and in the limit
$\delta\to 0^+$ we obtain the fractionally-damped wave equation.

\subsection{Non-dimensionalizing by a general scaling}
\label{su:GeneralScaling}

We fully non-dimensionalize the system by considering general
rescalings, where we scale $x$, $z$, and $t$ independently:
\[
  \hat x = a \,x, \quad \hat t = b \,t, \quad \text{and } \hat z = c\, z,
\]
but do not assume any scaling on the material parameters $\rhoo,\ \kkk,\
\rhooo$, and $\mu$. We keep $\hat U=U$ (which is always possible by
linearity), but need to rescale $v= b\,\hat v$ to transform  the no-slip
condition $\Dot U = v|_{z=0}$ into $\pl_{\hat t} \hat U = \hat
v|_{\hat z=0}$. The transformed equations read (after dropping the
hats) as follows
\[
  \rhoo b^2 \DDot U = \kkk a^2 \Delta_{x}U - \mu bc\,\pl_z v|_{z=0}
  \text{ on }\Sigma, \quad
  \rhooo b^2 \Dot v =  \mu bc^2 \pl_z^2 v + \mu a^2b \Delta_x v  \text{ in
  }\Omega.
\]
Dividing the equations by $\rhoo b^2$ and $\rhooo b^2$ respectively, we
can equate the first three of the four coefficients to 1, namely
\[
\frac{\kkk a^2}{\rhoo b^2} = 1,\quad \frac{\mu c}{\rhoo b} = 1, \quad
\frac{\mu c^2 }{\rhooo b}=1.
\]
We obtain the solution
\[
a^2 =\frac{\rhooo^2\mu^2}{\rhoo^3 \kkk}, \quad b = \frac{\rhooo
  \mu}{\rhoo^2}, \quad \text{and } c = \frac\rhooo\rhoo,
\]
and the remaining fourth coefficient reads
\[
  \eps:= \Big(\frac{\mu a^2}{\rhooo b}\Big)^{1/2}
  =
  \frac{a}{c}= \frac{\mu}{\sqrt{\rhoo \kkk}}.
\]
The non-dimensionalized coupled system now reads
\[
  \DDot U = \Delta_{x}U - \pl_z v|_{z=0} \text{ and } \Dot U=v|_{z=0}
  \text{ on }\Sigma, \quad   \Dot v =   \eps^2 \Delta_x v
  +\pl_z^2 v \text{ in }\Omega,
\]
which is exactly the renormalized system \eqref{eqI:SystemN}, which is
studied subsequently.

Hence, the system \eqref{eq:System} has a unique
non-dimensional parameter $\eps= \mu/\sqrt{\rhoo\, k\,}$ that
describes the effective anisotropy of the diffusion in the bulk
$\Omega$. Subsequently, we are interested in the case of very small
$\eps$ and indeed in the limit $\eps\to 0^+$.

We can interpret $\eps$ as the ratio of three different length
scales. Choosing an arbitrary time scale $t_0>0$, we have the
diffusion length $\ell_\mafo{diff}$, the ``equivalent membrane thickness''
$\ell_\mafo{thick}$, and the membrane travel length $\ell_\mafo{trav}$
given by
\[
  \ell_\mafo{diff}(t_0) = \Big( \frac{\mu t_*}{\rhooo}\Big)^{1/2}, \quad
  \ell_\mafo{thick}= \frac{\rhoo}\rhooo, \quad \text{and }
  \ell_\mafo{trav}(t_0)= t_0 c_\mafo{memb} = t_0 \Big(\frac
  \kkk\rhoo\Big)^{1/2},
\]
where $c_\mafo{memb}$ is the wave speed in the undamped membrane.
Now our dimensionless parameter $\eps$ is
given by
\[
  \eps = \frac{\ell_\mafo{diff}(t_0)^2} {\ell_\mafo{thick}\:
    \ell_\mafo{trav}(t_0)} \quad \text{for all }t_0>0\:.
\]

To make the definition even more intrinsic, we may choose $t_0$ as a
characteristic time $t_*$ for the system. We ask that the time
$t_*$ is chosen such that the corresponding diffusion length scale
equals the equivalent membrane thickness, viz.\
$\ell_\mafo{diff}(t_*)=\ell_\mafo{thick}$. This yields
\[
  t_*=  \frac{\rhoo^2}{ \mu \rhooo } \quad \text{and}
   \quad
   \ell_\mafo{trav}(t_*) = t_* c_\mafo{memb} 
   = \frac{ \rhoo^{3/2} \kkk^{1/2}}{\mu \rhooo } .
\]
The scalings of time and horizontal and vertical lengths are now given
as
\[
  t = t_* \:\hat t, \quad x =  \ell_\mafo{trav}(t_*)\:\hat
  x,\quad z = \ell_\mafo{thick} \:\hat z.
\]
This leads to the final relation
\[
\eps = \frac{\ell_\mafo{thick}}{ \ell_\mafo{trav}( t_*) } =\frac{ \mu}{
\sqrt{\rhoo \kkk}} \:.
\]

Typical parameters for the experimental setup consisting of a lipid
monolayer, such as DPPC at the water-air interface, are
$\rhoo = 10^{-6}~\mathrm{kg / m^2}$,
$\mu = 10^{-3}~\mathrm{Pa\,s} = 10^{-3}~\mathrm{kg/(s\,m)}$, and
$\kkk = 10^{-2}~\mathrm{N / m}$, where for $\rhoo$ the surface excess
mass density was used.  These parameters yield $\eps = 10$.  Although
this value is not small, it does not contradict our argumentation. As
shown in \cite{KapNet15MSWS}, different waves can coexist in such a
system, the longitudinal capillary waves with
$|\omega|^{3/2} \approx |k|^2$ being only one of them. In particular
it is interesting that the dispersion of this wave, which has been
known in the literature since \cite{Luca68LCW1}, follows from our
general calculation as a rigorous limit.

\subsection{The limit model and the fractionally-damped
  wave equation}
\label{su:LimitMOdel}

We now study the limit equation by setting $\eps=0$ in the
rescaled system
\eqref{eqI:SystemN}. The justification of taking this limit is given
in the following section.

After setting $\eps=0$ we obtain the system
\begin{subequations}
\label{eq:LiSystem}
\begin{align}
&\label{eq:LiSystem.a}
  \DDot U (t,x)= \Delta_x U(t,x) - \pl_z v(t,x,0) &&\text{for } t>0, \
                                                     x\in \Sigma,\\
&\label{eq:LiSystem.b}
  \Dot U(t,x) = v(t,x,0) && \text{for } t>0, \ x\in \Sigma,\\
& \label{eq:LiSystem.c}
 \Dot v(t,x,z) = \pl_z^2 v(t,x,z) &&\text{for } t>0,\ (x,z) \in \Omega.
\end{align}
\end{subequations}
The point is now that the equation for \eqref{eq:LiSystem.c} can be
solved explicitly by the use of the properly rescaled one-dimensional
heat kernel
$H(t,y)=(4\pi t)^{-1/2} \ee^{-y^2/(4t)}$. Note that $x\in \Omega$
appears now as a parameter only, since the diffusion in $x$-direction
is lost.

The solution of \eqref{eq:LiSystem.c} with the boundary condition
\eqref{eq:LiSystem.b} and the initial condition
$v(0,x,z)=0$ takes the explicit form
\[
v(t,x,z)= \int_0^t 2 \pl_z H(t{-}\tau,z) \, \Dot U(\tau,x) \dd \tau,
\]
see Section \ref{su:HalfLine} for a derivation. Taking the derivative
with respect to $z$ and using that the heat kernel $H$ satisfies
$\pl_z^2 H= \pl_t H$ we obtain
\[
  \pl_z v(t,x,z)= \int_0^t 2\pl_t H(t{-}\tau,z)\,\Dot U(\tau,x) \dd \tau =
  \int_0^t 2 H(t{-}\tau,z)\,\DDot U(\tau,x) \dd \tau,
\]
where for the integration by parts in the last identity we exploited
$\Dot U(t,x,0)=v(t,x,0)=0$ and $H(0,z)=0$ for $z<0$. Thus, evaluation at
$z=0$ and using $H(t,0)=(4\pi t)^{-1/2}$, the coupling
term in \eqref{eq:LiSystem.a} reduces to
\begin{equation}
  \label{eq:Memory}
  \pl_z v(t,x,0)= \int_0^t 2H(t{-}\tau,0) \DDot U(\tau,x)\dd \tau =
\int_0^\tau \frac1{\sqrt{\pi(t{-}\tau)}} \: \DDot U(\tau,x) \dd \tau.
\end{equation}

Through these formulas we see how kinetic energy is moved from the
membrane via the no-slip condition \eqref{eq:LiSystem.b} into the
one-dimensional diffusion equation. Through the memory kernel in
\eqref{eq:Memory} the energy is restored partially in a delayed
fashion, which leads to a fractional damping, here of order
$3/2$. Because in the bulk $\Omega= \Sigma \ti \III$ there is no
coupling between different points $x \in \Sigma$, this damping is
non-local in time but local with respect to $x\in \Sigma$.

Joining everything we see that the limiting system
\eqref{eq:LiSystem} contains the fractionally-damped wave equation
\begin{equation}
  \label{eq:FDWE}
  \DDot U(t,x) + \int_0^t \frac1{\sqrt{\pi(t{-}\tau)}}\: \DDot
U(\tau,x)\dd \tau = \Delta U(t,x) \quad \text{on } \Sigma.
\end{equation}
An analysis concerning existence of solutions and concerning the
energetics is given in Section \ref{se:EnergyDiss}.

\subsection{The dispersion relations}
\label{su:Dispersion}

Following \cite{KSSN17NFWE} we consider special solutions of
\eqref{eq:SystemN} obtained by a Fourier ansatz. For the temporal
growth factor $\mu=\ii \omega\in \C$ with $\Re \mu \leq 0$ and the wave vector
$k \in \R^{d-1}$  we set
\[
 (U(t,x),V(t,x), v(t,x,z)) = \ee^{\mu t + \ii k \cdot x} \big( a,b,
 c\,\ee^{\gamma z} \big)
\]
with $a,b,c,\gamma \in \C$ and $\Re \gamma>0$. From $V=U_t$ we obtain
$b=a\mu$, while $v|_{z=0}=V$ implies $c=b=a\mu$. Finally, we have to satisfy
the membrane equation $U_{tt} = \Delta_{x} U - v_z|_{z=0}$ and the diffusion
equation $\pl_t v= \eps^2 \Delta_x v+\pl_z^2 v$, which leads to the
algebraic relations (since only $a\neq 0$ is interesting)
$  \mu^2 = -| k |^2 - \mu \gamma$ and $\mu = -\eps^2 | k |^2 + \gamma^2$.
As in \cite{KSSN17NFWE} we eliminate  the variable $\gamma$ and
obtain the dispersion relation
\[
  0=\Gamma(\mu,  k )= \big(\mu^2{+}|k|^2\big)^2 - \eps^2 \mu^2|k|^2 -
  \mu^3,
\]
where we still need to be careful to satisfy $\Re \mu \leq 0$ and $\Re
\gamma >0$ with $\gamma^2=\mu {+} \eps^2|k|^2$.

For short waves, i.e.\ $|k|\gg1$, we obtain the expansion
\[
  \mu= - \ii |k| - \frac{|k|}2 \:\Big( \eps^2 -
  \frac{\ii}{|k|}\Big)^{1/2} + \text{h.o.t.}
\]
In the case $\eps>0$ this means that short waves travel at speed 1,
but are damped proportional to $|k|$. The limit $\eps=0$
leads to a significantly smaller damping, namely one of order $|k|^{1/2}$.

As expected due to the scaling discussed in the previous subsections,
the case of long waves, i.e.\ $|k| \ll 1$, is not so sensitive with
respect to $\eps$. For all $\eps\geq 0$ we find the expansion
\[
\mu= - \Big( \frac12 \pm \frac{\ii \sqrt3}2\Big) |k|^{4/3} + \text{h.o.t.}
\]
In particular, we find that the waves slow down for $|k|\to 0$,  because
the wave speed takes the form
$c(k) =\Im(\mu(k)/|k|) = \pm |k|^{1/3}\sqrt3/2 + $h.o.t.\ Moreover,
the damping is very low, because it is proportional to $|k|^{4/3} $.

\section{Convergence result for the semigroup}
\label{se:ConvergenceSG}

From now on it suffices to consider the rescaled system, where $\eps>0$
appears as the only small parameter:
\begin{subequations}
\label{eq:SystemN}
\begin{align}
&\label{eq:SystemN.a}
   \DDot U= \Delta_x U -  \pl_zv|_{z=0} &&\text{for } t>0, \ x\in \Sigma,\\
&\label{eq:SystemN.b}
  \Dot U= v|_{z=0}&& \text{for } t>0, \ x\in \Sigma,\\
& \label{eq:SystemN.c}
  \Dot v = \eps^2 \Delta_x v + \pl_z^2 v &&\text{for } t>0,\ (x,z)\in
 \Omega=\Sigma {\ti} \III.
\end{align}
\end{subequations}
In this section we first prove existence of solutions for the initial-value
problem and then show that in the limit $\eps\to 0$ the corresponding solutions
$t\mapsto w_\eps(t) \in \bfH $ converge strongly to $t\mapsto
w_0(t)$ in the Hilbert space $\bfH$. For this it is sufficient to employ
the classical theory of Trotter and Kato, see e.g.\
\cite[Sec.\,3.3]{Pazy83SLOA}, where convergence of the resolvent
implies convergence of the semigroup.

\subsection{Formulation via strongly continuous semigroups}
\label{su:Semigroup}

By introducing the variable $V=\Dot U$ and setting $w=(U,V,v)$,  we
rewrite system \eqref{eq:SystemN} in the form $\Dot w = A_\eps w$ and
will show that the solutions $w$ can be obtained in the form $w(t)=
\ee^{t A_\eps} w_0$, i.e.\ we have to show that $A_\eps$ is the
generator of a strongly continuous semigroup on the space $\bfH=
\rmH^1(\Sigma)\ti \rmL^2(\Sigma)\ti \rmL^2(\Omega)$.  We define the unbounded linear
operators $A_\eps : D(A_\eps) \subset \bfH \to \bfH$ via
\begin{align*}
&D(A_\eps) = \bigset{ (U,V,v)\in \rmH^2(\Sigma) \ti
      \rmH^1(\Sigma) \ti \big(X_1^\eps(\Omega){\cup}
      Y^\eps(\Omega)\big) }{v|_{z=0}=V \text{ on } \Sigma}, \\
  &A_\eps \bma{c} U\\V\\v\ema =
  \bma{c} V \\ \Delta_x U - (\pl_z v)|_{z=0} \\
   \eps^2 \Delta_x v + \pl_z^2 v \ema.
\end{align*}
Here the spaces $X_\lambda^\eps(\Omega)$ with $\lambda>0$ and
$Y^\eps(\Omega)$ are defined via
\begin{align*}
  X_\lambda^\eps(\Omega)&:=
\bigset{v \in \rmL^2 (\Omega)}{  \eps^2\Delta_x v +\pl_z^2
    v - \lambda v=0, \ v|_{z\in 0}\in \rmH^1(\Sigma)} \ \text{ and}
  \\
  Y^\eps(\Omega)&:=\bigset{v\in \rmL^2(\Omega)}{ \eps^2 \Delta_x v +
    \pl_z^2 v \in \rmL^2(\Omega),\ v|_{z=0}=0} \,.
\end{align*}
We emphasize that the domain for $\eps=0$ is different from the
domains for $\eps>0$, because of the missing $x$-derivatives for $v$ in
the first case. Nevertheless, the trace of $v$ at $z=0$ is
well-defined in $\rmL^2(\Sigma)$ because $\pl_z v$ lies in
$\rmL^2(\Sigma, \rmH^1(\III))$ and $\rmH^1( \III )$
embeds continuously into $\rmC^0( \III )$.

More precisely, for $\eps>0$ we may apply the classical elliptic
regularity theory from \cite{LioMag72NHBV} which shows that
$Y^\eps(\Omega)$ is a closed subspace of $H^2(\Omega)$ whereas
$X_\lambda^\eps(\Omega)$ is only contained in $H^{3/2}(\Omega)$ but
not in $\rmH^2(\Omega)$. In Step 3 of the proof below, we  will show that
for $\eps>0$ we have
\begin{equation}
  \label{eq:XYlambda=XY1}
  X_\lambda^\eps(\Omega) \cup Y^\eps(\Omega) =
  X_1^\eps(\Omega) \cup Y^\eps(\Omega) \text{ for all }\lambda >0.
\end{equation}

For $\eps=0$ the spaces $X_\lambda^0(\Omega)$ and $Y^0(\Omega)$ have
lower regularity in $x \in \Sigma$, namely
\begin{align*}
  X_\lambda^0(\Omega)
  &= \bigset{v\in \rmH^1(\Omega)}{
                       v(x,z)=\ee^{\sqrt{\lambda} z}v(x,0),\
                       v(\cdot,0)\in \rmH^1(\Sigma) },
  \\
  Y^0(\Omega)
  &:=\bigset{ v \in \rmL^2\big(\Sigma;\rmH^2(\III)\big) }{
    v(x,0)=0 \text{ a.e.\ in } \Sigma}.
\end{align*}
Since $z\mapsto \ee^z - \ee^{\sqrt\lambda z}$ lies in
$H^2( \III ) $ and vanishes at $z=0$,  we easily see
$X_\lambda^0(\Omega) \cup Y^0(\Omega) = X_1^0(\Omega) \cup
Y^0(\Omega)$ for all $\lambda>0$.

Our first result in this section shows that for each
$\eps\geq 0$ the operator $A_\eps$ generates a strongly
continuous semigroup $(\ee^{tA_\eps})_{t\geq 0} $ on $\bfH $ with a
uniform growth rate 1.

\begin{theorem}[Generation of semigroups] \label{th:GenerSG}
For all $\eps \geq 0$ the operators $A_\eps$ defined above are
closed. For $\Re \lambda>0$ the resolvents $(A_\eps{-}\lambda I)^{-1}:
\bfH \to D(A_\eps)\subset \bfH$ exist and satisfy the estimate
\begin{equation}
  \label{eq:Resolvent}
  \big\| (A_\eps{-}\lambda I)^{-1} \big\|_{\bfH\to \bfH} \leq \frac 1
  {\lambda -1} \text{ for } \lambda >1.
\end{equation}
In particular, $A_\eps$ is the generator of the strongly continuous
semigroup $\ee^{t A_\eps}:\bfH\to \bfH$ satisfying $\| \ee^{t A_\eps}
\|_{\bfH\to \bfH} \leq 1+t\,\ee/2$ for $t\geq 0$. Moreover, the
functional
energy
\begin{equation}
  \label{eq:Energy}
  \calE_0(U,\Dot U,v)= \int_\Sigma \Big\{\frac12 \Dot U(x)^2 + \frac12
  |\nabla U(x)|^2 + \int_{-\infty}^0 \frac12\,v(x,z)^2 \dd z \Big\} \dd x
\end{equation}
is a Lyapunov function, i.e.\ along solutions we have the estimate
$\calE_0(U(t),\Dot U(t),v(t)) \leq \calE_0(U(s),\Dot U(s),v(s))$ for
$t > s \geq 0$.
\end{theorem}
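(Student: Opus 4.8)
The plan is to obtain the generation statement from the Lumer--Phillips theorem: I would derive the resolvent bound \eqref{eq:Resolvent} from a dissipativity estimate in the $\bfH$-inner product, establish surjectivity of $A_\eps-\lambda I$ from the elliptic theory that underlies the spaces $X_\lambda^\eps$ and $Y^\eps$, and then read off the Lyapunov property and the sharpened \emph{linear} growth bound from the energy $\calE_0$.

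The first and cleanest step is the dissipativity computation. For $w=(U,V,v)\in D(A_\eps)$ I would evaluate $\Re\langle A_\eps w,w\rangle_\bfH$ directly. Integrating by parts in $x$ over $\Sigma$ (no boundary terms, by \eqref{eq:Sigma.Form}) and in $z$ over $\III$, the two cross terms $\Re\langle\nabla_x V,\nabla_x U\rangle$ and $\Re\langle\Delta_x U,V\rangle$ cancel, and --- this is the decisive cancellation reflecting the Hamiltonian no-slip coupling --- the boundary contribution $-\int_\Sigma \pl_z v(x,0)\,\overline V\,\dd x$ coming from $-\pl_z v|_{z=0}$ in the second component cancels exactly against the boundary term $\int_\Sigma \pl_z v(x,0)\,\overline{v(x,0)}\,\dd x$ produced by $\pl_z^2 v$ in the third, since $v|_{z=0}=V$. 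What survives is $\Re\langle A_\eps w,w\rangle_\bfH=\Re\langle V,U\rangle_{\rmL^2}-\eps^2\|\nabla_x v\|^2-\|\pl_z v\|^2\le\tfrac12\|w\|_\bfH^2$, where only $\Re\langle V,U\rangle_{\rmL^2}$ fails to be sign-definite. Hence $A_\eps-I$ is dissipative, and once surjectivity is known this yields $\|(A_\eps-\lambda I)^{-1}\|\le 1/(\lambda-1)$ for real $\lambda>1$, i.e.\ \eqref{eq:Resolvent}; closedness then follows from the existence of a bounded, everywhere-defined inverse.

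The main obstacle is the range (surjectivity) condition: solving $(A_\eps-\lambda I)w=f$ for every $f\in\bfH$ whenever $\Re\lambda>0$. I would eliminate $V=\lambda U+f_1$ and reduce the system to the boundary-value problem $\eps^2\Delta_x v+\pl_z^2 v-\lambda v=f_3$ on $\Omega$ with Dirichlet datum $v|_{z=0}=\lambda U+f_1$, coupled to the membrane equation $\Delta_x U-\lambda^2 U-\pl_z v|_{z=0}=f_2+\lambda f_1$. For $\Re\lambda>0$ the $v$-problem is uniformly elliptic and uniquely solvable, and its solution splits into the $\lambda$-harmonic extension of the boundary datum (lying in $X_\lambda^\eps(\Omega)$) plus a part with zero trace (lying in $Y^\eps(\Omega)$); this is exactly the role of the two spaces, and the reason \eqref{eq:XYlambda=XY1} lets me fix $\lambda=1$ in the definition of $D(A_\eps)$. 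Taking the normal trace produces a parabolic Dirichlet-to-Neumann operator, $\pl_z v|_{z=0}=N_\eps^\lambda(\lambda U+f_1)+(\text{data})$, and inserting it turns the membrane equation into a coercive elliptic equation for $U\in\rmH^2(\Sigma)$, solvable by Lax--Milgram using the accretivity of $N_\eps^\lambda$. The delicate points are the anisotropic regularity estimates from \cite{LioMag72NHBV} (which give $Y^\eps\subset\rmH^2$ but only $X_\lambda^\eps\subset\rmH^{3/2}$), the well-definedness of $\pl_z v|_{z=0}$ in $\rmL^2(\Sigma)$, and the degeneration of the $x$-regularity as $\eps\to0$, where the problem decouples into the explicit one-dimensional heat problem of Section \ref{su:LimitMOdel}.

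With dissipativity, surjectivity, closedness, and density of $D(A_\eps)$ in $\bfH$ in hand, Lumer--Phillips yields the strongly continuous semigroup, a priori only with $\|\ee^{tA_\eps}\|\le\ee^{t}$. To upgrade this to the stated linear bound I would exploit that $\calE_0$ is only a \emph{seminorm}: along solutions with data in $D(A_\eps)$ the surviving boundary cancellation gives $\tfrac{\dd}{\dd t}\calE_0=-\eps^2\|\nabla_x v\|^2-\|\pl_z v\|^2\le0$, which is the Lyapunov property (extended to all data by density). Since $\calE_0$ controls $\|\nabla U\|$, $\|V\|$ and $\|v\|$ but not $\|U\|_{\rmL^2}$, I would write $\|w(t)\|_\bfH^2=\|U(t)\|_{\rmL^2}^2+2\calE_0(t)$, use $\calE_0(t)\le\calE_0(0)$, and combine $\Dot U=V$ with $\|V(s)\|\le\sqrt{2\calE_0(0)}$ to obtain $\|U(t)\|_{\rmL^2}\le\|U(0)\|_{\rmL^2}+t\sqrt{2\calE_0(0)}$; a short optimization over how $\|w_0\|_\bfH^2$ splits between $\|U(0)\|_{\rmL^2}^2$ and $2\calE_0(0)$ then produces the constant and gives $\|\ee^{tA_\eps}\|\le 1+t\,\ee/2$. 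This linear-in-$t$ growth, rather than the exponential bound of the abstract theorem, is precisely what the seminorm structure of $\calE_0$ buys.
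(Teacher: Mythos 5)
Your proposal is correct, and its core coincides with the paper's own proof: your dissipativity computation in the $\bfH$-inner product is exactly the paper's Step~1 estimate specialized to the norm $\norm\cdot\norm_1=\|\cdot\|_{\rmH^1}$ (same integration by parts, same decisive cancellation of the boundary terms $\pm\int_\Sigma \pl_z v|_{z=0}\,V\,\dd x$ via the no-slip constraint), and your surjectivity argument (eliminate $V$, solve the anisotropic elliptic problem for $v$, split into the $X_\lambda^\eps$/$Y^\eps$ parts, insert the Dirichlet-to-Neumann operator, and apply Lax--Milgram to the resulting coercive problem for $U$, with $N_\lambda^0=\sqrt\lambda\,I$ in the degenerate case $\eps=0$) is the paper's Steps~2--4. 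Where you genuinely diverge is the linear growth bound. The paper introduces the whole family of equivalent norms $\norm w\norm_\alpha^2=\alpha^2\|U\|_2^2+\|\nabla_x U\|_2^2+\|V\|_2^2+\|v\|_2^2$, shows $\norm\ee^{tA_\eps}w\norm_\alpha\le\ee^{t\alpha/2}\norm w\norm_\alpha$ for every $\alpha\ge0$, and then optimizes $\max\{\alpha,1/\alpha\}\,\ee^{t\alpha/2}$ over $\alpha$; the Lyapunov property of $\calE_0$ is read off as the case $\alpha=0$. You instead use only the two extreme cases: the seminorm contraction $\calE_0(t)\le\calE_0(0)$ (which you prove directly, the same computation as $\alpha=0$) together with $\Dot U=V$ and $\|V(s)\|_{\rmL^2}\le\sqrt{2\calE_0(0)}$ to get $\|U(t)\|_{\rmL^2}\le\|U(0)\|_{\rmL^2}+t\sqrt{2\calE_0(0)}$, and then split $\|w\|_\bfH^2=\|U\|_{\rmL^2}^2+2\calE_0$. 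This is more elementary (no norm family, no optimization over $\alpha$), and it actually yields the slightly sharper bound $\|\ee^{tA_\eps}\|_{\bfH\to\bfH}\le 1+t$ --- since $(\,\|U_0\|+t\sqrt{2\calE_0(0)}\,)^2+2\calE_0(0)\le(1{+}t{+}t^2)\|w_0\|_\bfH^2\le(1{+}t)^2\|w_0\|_\bfH^2$ --- which implies the stated $1+t\,\ee/2$. Two small points you share with the paper and should not lose sight of: density of $D(A_\eps)$ in $\bfH$ is needed for Lumer--Phillips (in a Hilbert space it follows automatically from dissipativity plus the range condition, so this is harmless), and the extension of the Lyapunov/growth estimates from $D(A_\eps)$ to all of $\bfH$ by density, which you correctly flag.
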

\begin{proof} We first treat the case $\eps>0$ in Steps 1 to 3 and
  then discuss the differences for the case $\eps=0$ in Step 4.\smallskip

  \emph{Step 1: A priori estimate.} For $\alpha>0$ we use the norm
  $|\cdot|_\alpha$ on $\rmH^1(\Sigma)$ defined via $|U|_\alpha^2 =
  \alpha^2\|U\|_2^2 + \|\nabla_xU\|_2^2$, where $\|\cdot\|_2$ is the
  standard $\rmL^2$ norm on $\Sigma$.

For $w=(U,V,v)\in D(A_\eps)$ and $\alpha\geq 0$ we  obtain the
estimate
\begin{equation}
\label{eq:alpha.Estim}
\begin{aligned}
    \big\langle \!\!\big\langle A_\eps w,w
    \big\rangle\!\!\big\rangle_\alpha
  &:=\langle V,U\rangle_\alpha
      + \int_\Sigma\big(\Delta_xU{-}\pl_z v|_{z=0}\big) V \dd x
      + \int_\Omega \big(\eps\Delta_xv{+}\pl_z^2 v\big) v \dd z \dd x
  \\
  &= \int_\Sigma\Big\{ \alpha^2 UV+\nabla U\cdot \nabla V  - \nabla
    U\cdot \nabla V - (\pl_z v\, v)|_{z=0}\\
  &\hspace*{3em} - \int_{-\infty}^0\big(\eps^2 |\nabla_{x}v|^2 - (\pl_z
    v)^2\big) \dd z +   (\pl_z v\, v)|_{z=0} \Big\} \dd x
  \\
  &= \int_\Sigma  \alpha^2 UV \dd x - \int_\Omega \big( \eps^2
    |\nabla_{x}v|^2 - (\pl_z v)^2\big)  \dd z \dd x
  \\
  & \leq \frac\alpha2 \big( \alpha^2\|U\|_2^2 + \|V\|_2^2 \big)
    \leq \frac\alpha2 \big\langle\!\!\big\langle  w,w
    \big\rangle\!\!\big\rangle_\alpha =  \frac\alpha2 \norm w\norm_\alpha ^2,
  \end{aligned}
\end{equation}
where we used the norm $\norm w\norm_\alpha = \llla w,w,\rrra^{1/2}$.
For $\lambda >0$ and $F=(A_\eps {-}\lambda I)w$ we obtain the estimate
\[
  \norm F \norm _\alpha \norm w \norm _\alpha \geq  -\big\langle
  \!\!\big\langle F ,w
    \big\rangle\!\!\big\rangle_\alpha = -
   \big\langle \!\!\big\langle (A_\eps{-}\lambda I) w,w
                  \big\rangle\!\!\big\rangle_\alpha
 \geq \big( \lambda - \frac\alpha2 \big)
  \llla w,w \rrra_\alpha
  =\big( \lambda - \frac\alpha2 \big) \norm w\norm_\alpha ^2.
\]
Thus, for $\alpha>0$ we have established the estimate
\begin{equation}
  \label{eq:ResEst.alpha}
  \norm (A_\eps {-}\lambda I)^{-1} F\norm_\alpha \leq
  \frac1{\lambda{-}\alpha/2} \norm F\norm_\alpha \quad
   \text{for all } \lambda >\alpha/2.
\end{equation}
Because of $\norm \cdot \norm_1=\| \cdot\|_{\rmH^1}$ we obtain
\[
  \Big\| \big(A_\eps{-}\lambda I\big)^{-1} F \Big\|_\bfH = \big\| w
  \big\|_\bfH \leq \frac1{\Re \lambda -1/\sqrt2}  \big\| F \big\|_\bfH
  \,.\smallskip
\]
In particular, we have shown that the bounded linear operators
$A_\eps{-}\lambda I: D(A_\eps)
\to \bfH$ are injective. The following steps show that these operators
are also surjective, i.e.\ the resolvent equations have a solution in
$D(A_\eps)$.

\emph{Step 2: Reduction of resolvent equation.} It remains to show that for
all $\lambda >0$ the resolvent equation
$(A_\eps{-} \lambda I)w=F \in \bfH$ has a solution in $w=(U,V,v)\in
D(A_\eps)$. In this step we reduce the problem to an equation for $U$
alone.

Writing $F=(G,H,f)$ the system reads
\begin{subequations}
  \label{eq:ResolventEqn}
  \begin{align}
      \label{eq:Resolv.Sigma}
\text{on }\Sigma:\quad  & V-\lambda U= G ,\quad \Delta_x U-\pl_z v|_{z=0} -
                 \lambda V=H   , \quad V = v|_{z=0}, \\
  \label{eq:Resolv.Omega}
    \text{in } \Omega:\quad  & \eps^2 \Delta_x v + \pl_z^2 v - \lambda v = f .
\end{align}
\end{subequations}
Obviously, we can eliminate $V$ using the first equation giving
$V=G+\lambda U$.

Next, we solve \eqref{eq:Resolv.Omega} for $v$. Together with the
Dirichlet boundary condition $v=V$ at $z=0$, we obtain a unique
solution $v= \mathsf V_\lambda^\eps (f,V)$. By classical elliptic
regularity theory (see \cite{LioMag72NHBV}), for all $\lambda>0$ the
bounded linear operator $\mathsf V_\lambda^\eps$ maps
$\rmH^s(\Omega)\ti \rmH^{s+3/2}(\Sigma)$ to $\rmH^{s+2}(\Omega)$.
Because we have $V \in \rmH^1(\Sigma)$ and $f \in \rmL^2(\Omega)$, we
treat the two inhomogeneities separately, namely
$\mathsf V_\lambda^\eps (\cdot, 0): \rmL^2(\Omega)\to Y^\eps(\Omega)
\subset \rmH^2(\Omega)$ and
$\mathsf V_\lambda^\eps (0,\cdot): \rmH^1(\Sigma) \to X_\lambda^\eps(
\Omega) \subset \rmH^{3/2}(\Omega)$. As the equation for $U$ uses
$\pl_z v|_{z=0}$, we define the operators
\begin{equation}
    \label{eq:Dir2Neu.lambda}
     M_\lambda^\eps :\left\{\ba{ccc}
\rmL^2(\Omega) &\to& H^{1/2}(\Sigma) ,\\
f& \mapsto& \pl_z \mathsf V_\lambda^\eps (f,0)|_{z=0}, \ea\right.
\ \text{ and } \
 N_\lambda^\eps :\left\{\ba{ccc}
\rmH^1(\Sigma) &\to& \rmL^2(\Sigma) ,\\
V & \mapsto& \pl_z \mathsf V_\lambda^\eps (0,V)|_{z=0}, \ea\right.
\end{equation}
where $N^\eps_\lambda$ is a Dirichlet-to-Neumann operator.
It remains to solve an equation for $U$:
\begin{equation}
  \label{eq:Resolv.U}
  -\Delta_x U + (\lambda^2 I + \lambda N_\lambda^\eps )U =
   -H- M_\lambda^\eps f - (\lambda I + N_\lambda^\eps ) G.\smallskip
\end{equation}

\emph{Step 3. $(A_\eps {-}\lambda I)F \in D(A_\eps)$.}
Using $F=(G,H,f)\in \bfH$ and the mapping properties of $M_\lambda$
and $N_\lambda^\eps$, we see that the right-hand side in \eqref{eq:Resolv.U}
lies in $\rmL^2(\Sigma)$.  Moreover, for $\lambda>0 $, the operator on
the left-hand side
generates a bounded and coercive bilinear form on $\rmH^1(\Sigma)$,
because $\int_\Sigma U N_\lambda^\eps U \dd x = \int_\Omega \big(
\eps^2|\nabla_x v|^2 + (\pl_z v)^2 + \lambda v^2\big) \dd z \dd x \geq
0$, where $v= \mathsf V_\lambda(0,U)$. This is of course the same
calculation as in Step 1.   Thus, the Lax-Milgram theorem provides a
unique solution $U \in \rmH^1(\Sigma)$, which by classical linear
regularity lies even in $\rmH^2(\Sigma)$. From
$V=G+\lambda U$ we obtain $V \in \rmH^1(\Sigma)$. Finally, we obtain
$v= \mathsf V_\lambda^\eps (f,V) \in X^\eps_\lambda(\Omega) \cup
Y^\eps(\Omega)$.

Thus, we are done, if the identity \eqref{eq:XYlambda=XY1} is
established. For this we take any $W\in \rmH^1(\Sigma)$ and consider
$v_\lambda:=\mathsf V_\lambda^\eps (0,W) \in X_\lambda^\eps(\Omega)$
and   $v_1:=\mathsf V_1(0,W) \in X^\eps_1(\Omega)$. By the definition
of $\mathsf V_\lambda^\eps (0,\cdot)$ we see that the difference
$w:=v_\lambda -v_1$ satisfies the linear PDE
\[
  \eps^2 \Delta_x w+ \pl_z^2 w - w = (1{-}\lambda) v_\lambda \in
  \rmH^{3/2}(\Omega) , \quad w|_{z=0} = 0.
\]
Hence, we conclude $w \in Y^\eps(\Omega)$, which implies 
$v_\lambda = v_1 + w \in X^\eps_1(\Omega) \cup Y^\eps(\Omega)$ as
desired.\smallskip

\emph{Step 4. The case $\eps=0$.} The a priori estimate in Step 1
works for this case, too. The elimination of $V$ and $v$ works
similarly, but now with the simplification that $N_\lambda$ is
explicitly given, namely $N^0_\lambda = \sqrt \lambda I$. Together
with $X_\lambda^0(\Omega)\cup Y^0(\Omega) = X_1^0(\Omega)\cup
Y^0(\Omega) $ (see above) we conclude, and Theorem
\ref{th:GenerSG} is established.

\emph{Step 5. Growth rates for the semigroup.}  From
\eqref{eq:ResEst.alpha} we know that the semigroups $\ee^{t
  A_\eps}$ satisfy
the growth estimate $ \norm \ee^{t A_\eps} w \norm_\alpha \leq \ee^{t \alpha/2}
\norm w \norm_\alpha $ for all $\alpha\geq 0$. Setting
$\underline\alpha=\min\{1,\alpha\}$ and $\ol\alpha= \max\{1,\alpha\}$
and using the equivalence between $|\cdot|_\alpha$ and
$\|\cdot\|_{\rmH^1} = |\cdot|_1$, we obtain
\begin{align*}
  \|\ee^{t A_\eps} w\|_\bfH
  &\leq \frac1{\underline\alpha} \norm  \ee^{t A_\eps} w \norm_\alpha
\leq \frac1{\underline\alpha} \ee^{t\alpha/2} \norm w \norm_\alpha
    \leq \frac{\ol\alpha}{\underline\alpha}  \ee^{t\alpha/2}
    \|w\|_{\bfH} = \max\{ \alpha, 1/\alpha\}  \ee^{t\alpha/2}
    \|w\|_{\bfH}.
\end{align*}
Optimizing with respect to $\alpha>0$ yields the bound $\ee^{t/2}$ for
$t\in [0,2]$ and $t\,\ee /2$ for $t\geq 2$, which implies the final result $
\|\ee^{t A_\eps} \|_{\bfH \to \bfH} \leq (1+t\,\ee /2)$.

The final statement concerning $\calE_0$ follows by setting $\alpha
=0$, observing $\calE_0(w)=\frac12\norm w\norm_0^2$, and the contraction
property  $ \norm \ee^{t A_\eps} w \norm_0 \leq \ee^{0\cdot t}
\norm w \norm_0 = \norm w\norm_0$.
\end{proof}

\subsection{Convergence of semigroups}
\label{su:CvgSemigroup}

The next result proves the convergence of the resolvents
$(A_\eps{-}\lambda I)^{-1}$ as operators from $\bfH $ into itself in
the strong operator topology. The critical point is to understand the
convergence of the Dirichlet-to-Neumann operators $N_\lambda^\eps$ to
the limiting operator
$N_\lambda^0$, see \eqref{eq:Dir2Neu.lambda}.

\begin{proposition}[Strong convergence of resolvents]
  \label{pr:StrongResCvg}
  For all $\lambda>0$ and all $F \in \bfH$ we have the strong convergence
  $(A_\eps{-}\lambda I)^{-1} F \to (A_0{-}\lambda I)^{-1} F$.
\end{proposition}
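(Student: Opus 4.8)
The plan is to diagonalize the entire resolvent problem by the Fourier transform in $x$, exploiting the translation invariance of \eqref{eq:SystemN} over the abelian group $\Sigma$. By Plancherel, $\rmL^2(\Sigma)$ is unitarily equivalent to $\rmL^2(\widehat\Sigma)$ and $\Delta_x$ becomes multiplication by $-|k|^2$, where $k$ runs over the dual group $\widehat\Sigma$. Solving the bulk equation \eqref{eq:Resolv.Omega} with Dirichlet datum $v|_{z=0}=V$ and decay as $z\to-\infty$ then reduces, for each $k$, to the scalar ODE $\pl_z^2\hat v - (\lambda{+}\eps^2|k|^2)\hat v=\hat f$ on $\III$, whose homogeneous decaying solution is $\hat v(k,z)=\hat V(k)\,\ee^{\beta_\eps(k) z}$ with $\beta_\eps(k):=\sqrt{\lambda{+}\eps^2|k|^2}$. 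Consequently the Dirichlet-to-Neumann operator $N_\lambda^\eps$ acts in Fourier variables as multiplication by $\beta_\eps(k)$, and for $\eps=0$ we recover $\beta_0(k)\equiv\sqrt\lambda$, i.e.\ $N_\lambda^0=\sqrt\lambda\,I$, consistently with Step 4 of the proof of Theorem \ref{th:GenerSG}.

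First I would establish the central estimate for the Dirichlet-to-Neumann operators, which the text identifies as the critical point. From the subadditivity $\sqrt{\lambda{+}\eps^2|k|^2}\le\sqrt\lambda+\eps|k|$ one gets the pointwise multiplier bound $|\beta_\eps(k)-\sqrt\lambda|\le\eps|k|$, and hence, by Plancherel,
\[
  \big\|\big(N_\lambda^\eps - N_\lambda^0\big)V\big\|_{\rmL^2(\Sigma)}
  \le \eps\,\|\nabla_x V\|_{\rmL^2(\Sigma)}
  \qquad\text{for all }V\in\rmH^1(\Sigma).
\]
This yields strong convergence $N_\lambda^\eps\to N_\lambda^0$ on $\rmH^1(\Sigma)$ with an explicit rate $O(\eps)$; the same kind of dominated-convergence argument in the $(k,z)$ variables controls the bulk map $M_\lambda^\eps f=\pl_z\mathsf V_\lambda^\eps(f,0)|_{z=0}$, whose Fourier representation through the Green's function of $\pl_z^2-\beta_\eps(k)^2$ on $\III$ converges as $\beta_\eps(k)\to\sqrt\lambda$, giving $M_\lambda^\eps f\to M_\lambda^0 f$ in $\rmL^2(\Sigma)$.

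Next I would pass to the limit in the reduced equation \eqref{eq:Resolv.U}. In Fourier variables its solution is explicit,
\[
  \hat U_\eps(k)=\frac{-\hat H(k)-\widehat{M_\lambda^\eps f}(k)-(\lambda{+}\beta_\eps(k))\hat G(k)}{|k|^2+\lambda^2+\lambda\,\beta_\eps(k)},
\]
and since $\beta_\eps(k)\ge\sqrt\lambda$ the denominator is bounded below by $\lambda^2+\lambda^{3/2}>0$ uniformly in $\eps\ge0$ and $k$. For fixed $k$ the numerator and denominator converge to their $\eps=0$ values, so $\hat U_\eps(k)\to\hat U_0(k)$ pointwise. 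To upgrade this to $U_\eps\to U_0$ in $\rmH^1(\Sigma)$ I would invoke the uniform resolvent bound from Theorem \ref{th:GenerSG} (its $\alpha$-version \eqref{eq:ResEst.alpha}, which for given $\lambda>0$ applies upon choosing $\alpha\in{]0,2\lambda[}$): the operators $(A_\eps{-}\lambda I)^{-1}$ are bounded on $\bfH$ uniformly in $\eps$, so by the standard $3\eps$-argument it suffices to prove the claimed convergence for $F$ in a dense subset of $\bfH$ --- for instance $F=(G,H,f)$ with $\hat G,\hat H,\hat f$ compactly supported in $k$ --- where the multipliers converge uniformly and the limit passes by inspection. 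Finally I recover $V_\eps=G+\lambda U_\eps\to V_0$ in $\rmH^1(\Sigma)$ and, writing $v_\eps=\mathsf V_\lambda^\eps(f,V_\eps)$ and splitting off the two inhomogeneities, use the explicit profile $\hat v_\eps(k,z)=\hat V_\eps(k)\,\ee^{\beta_\eps(k) z}$ (with $\int_{-\infty}^0\ee^{2\beta_\eps z}\dd z=1/(2\beta_\eps)\le 1/(2\sqrt\lambda)$) together with the bulk part to conclude $v_\eps\to v_0$ in $\rmL^2(\Omega)$. This gives convergence of all three components, i.e.\ $(A_\eps{-}\lambda I)^{-1}F\to(A_0{-}\lambda I)^{-1}F$ in $\bfH$.

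I expect the main obstacle to be conceptual rather than computational: because the limit operator $A_0$ loses all diffusion in the $x$-direction, the domains $D(A_\eps)$ and $D(A_0)$ genuinely differ and there is no uniform-in-$\eps$ smoothing in $x$ available in the bulk $\Omega$. One therefore cannot argue by continuity of coefficients in a fixed space or by compactness in $\Omega$; the Fourier diagonalization is what makes the problem tractable, reducing everything to pointwise-in-$k$ convergence of explicit multipliers. The genuinely delicate point is then the bulk component: one must verify that the $z$-profiles $\hat v_\eps(k,\cdot)$ converge in $\rmL^2(\III)$ uniformly enough in $k$ to pass to the limit in $\rmL^2(\Omega)=\rmL^2\big(\widehat\Sigma;\rmL^2(\III)\big)$, which is exactly where the lower bound $\beta_\eps\ge\sqrt\lambda$ and the $\rmH^1$-regularity of $V$ and $G$ supply the required domination.
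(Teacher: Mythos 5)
Your proposal is correct, but it takes a genuinely different route from the paper. The shared skeleton is the reduction to a dense set of nice data via the uniform resolvent bound \eqref{eq:ResEst.alpha} (your ``standard $3\eps$-argument'' is exactly Step 1 of the paper's proof), and both arguments ultimately rest on translation invariance in $\Sigma$. The difference lies in how the convergence is then established. The paper never computes $N_\lambda^\eps$: it uses translation invariance abstractly to show that $\pl_{x_j}F\in\bfH$ implies $\pl_{x_j}w^\eps\in\bfH$ with a uniform bound (its dense set is $\bfZ=\{F:\nabla_x F\in\bfH\}$ rather than your compact-Fourier-support class), and then compares $w^\eps$ with $w^0$ directly in the $\llla\cdot,\cdot\rrra_\alpha$ inner product, using the cancellation $(A_0{-}\lambda I)w^\eps=F-(0,0,\eps^2\Delta_x v^\eps)^\top$ to get $\norm w^\eps{-}w^0\norm_\alpha^2\le 2\eps^2\wh C_\alpha C_\lambda\|F\|_\bfZ^2$. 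You instead diagonalize the whole resolvent system by Fourier transform on the dual group of $\Sigma$, identify $N_\lambda^\eps$ as multiplication by $\beta_\eps(k)=\sqrt{\lambda+\eps^2|k|^2}$, and pass to the limit in explicit multiplier formulas (including the bulk profile $\hat V_\eps(k)\ee^{\beta_\eps(k)z}$ and the Green's-function representation of $M_\lambda^\eps$). What your approach buys is explicitness: it exhibits the Dirichlet-to-Neumann operator exactly, makes the bound $0\le\beta_\eps(k)-\sqrt\lambda\le\eps|k|$ (hence the $O(\eps)$ rate) transparent, and explains structurally why $N_\lambda^0=\sqrt\lambda\,I$. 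What the paper's energy method buys is robustness: it needs no solution formulas, so it would survive perturbations (variable coefficients, lower-order terms) that destroy Fourier diagonalization, and its Step 3 estimate is precisely the computation that is recycled, at the level of the semigroups, to prove the quantitative convergence rate \eqref{eq:QuantEstim} in Theorem \ref{th:StrongCvgSol}. One small point you should make explicit: your Fourier-space solution must be identified with the resolvent $(A_\eps{-}\lambda I)^{-1}F$ constructed in Theorem \ref{th:GenerSG}, which follows from the injectivity of $A_\eps{-}\lambda I$ established in Step 1 of that proof; otherwise your argument as written only treats a candidate solution of the reduced system \eqref{eq:ResolventEqn}.
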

\begin{proof} Throughout the proof $\lambda>0$ is fixed.

\emph{Step 1. Reduction to $F$ in a dense subset $\bfZ$ of $\bfH$.}
Let $\bfZ \subset \bfH$ be given such that $\bfZ$ is dense in $\bfH$
and that for all $F\in \bfZ$ we have $(A_\eps {-} \lambda I)^{-1}F \to
(A_0 {-} \lambda I)^{-1}F$ as $\eps \to 0^+$.

For an arbitrary $F\in \bfH$ we consider $F_n \in \bfZ$ with
$F_n \to F$ in $\bfH$ as $ n \to \infty$. By Step 1 in the proof of
Theorem \ref{th:GenerSG} we know that the resolvents
$(A_\eps {-} \lambda I)^{-1}$ are uniformly bounded by $C_\lambda$
with respect to $\eps >0$. Hence we have
\begin{align*}
  &\big\| (A_\eps {-} \lambda I)^{-1}F - (A_0 {-} \lambda I)^{-1}F
 \big\|_\bfH
  \\
  & \leq
  \big\| (A_\eps {-} \lambda I)^{-1}( F {-}F_n)  \big\|_\bfH
+\big\| (A_\eps {-} \lambda I)^{-1}F_n - (A_0 {-} \lambda I)^{-1}F_n
  \big\|_\bfH
    +\big\| (A_0 {-} \lambda I)^{-1} (F_n {-} F)  \big\|_\bfH
  \\
  &
  \leq C_\lambda \|F{-}F_n\|_\bfH +
  \big\| (A_\eps {-} \lambda I)^{-1}F_n - (A_0 {-} \lambda I)^{-1}F_n
    \big\|_\bfH   + C_\lambda \|F{-}F_n\|_\bfH .
\end{align*}
Thus, for a given $\delta>0$ we can make the difference small by first
choosing $n$ so big that $C_\lambda  \|F{-}F_n\|_\bfH < \delta/3$ and
then choosing $\eps_0 >0$ so small that the middle term is less than
$\delta /3$ for all $\eps \in {]0,\eps_0[}$ as well.
Thus, $(A_\eps {-} \lambda I)^{-1}F \to (A_0 {-} \lambda I)^{-1}F$
holds for all $F \in \bfH$.

\emph{Step 2. Higher regularity for smooth right-hand sides $F$.} We use
that the system is translation invariant in the domain $\Sigma$. Thus,
if the partial derivatives $\pl_{x_j} F$ lie in $\bfH$, then the
solutions $w^\eps = (A_\eps{-}\lambda I)^{-1} F$ have an additional
derivative in $x_j$ direction as well and satisfy the a priori
estimate $\| \pl_{x_j} w^\eps \|_\bfH \leq C_\lambda \|\pl_{x_j}
F\|_\bfH$. Thus, for $F$ in the dense subset
\begin{equation}
  \label{eq:def.bfZ}
    \bfZ = \bigset{ F\in \bfH}{ \nabla_x F \in \bfH }
\end{equation}
we obtain the improved estimate $\| w^\eps\|_\bfZ \leq C_\lambda
\|F\|_\bfZ $ where $\|F\|_\bfZ:=\|F\|_\bfH+\|\nabla_x F\|_\bfH$.

\emph{Step 3. Convergence for $\eps \to 0^+$.}  We now assume $F\in
\bfZ$ and compare $w^\eps =(U^\eps,V^\eps,v^\eps)= (A_\eps{-}\lambda
I)^{-1} F$ with $w^0=(U^0,V^0,v^0) = (A_0{-}\lambda I)^{-1}F$. As in
Step 1 of the proof of Theorem \ref{th:GenerSG}, we estimate the
difference $w^\eps - w^0$ as follows (choosing
$\alpha>0$ with $\sqrt\alpha <2\lambda$):
\begin{align*}
  & \big(\lambda{-}\frac{\sqrt\alpha}2\big)
    \big\langle\!\!\big\langle
        w^\eps{-}w^0 , w^\eps{-}w^0
    \big\rangle\!\!\big\rangle_\alpha
 % \\ &
  \leq -  \big\langle\!\!\big\langle
    (A_0{-}\lambda I) (w^\eps{-}w^0), w^\eps{-}w^0
    \big\rangle\!\!\big\rangle_\alpha
  \\
  & \overset{*}= \big\langle\!\!\big\langle
    (0,0,\eps^2 \Delta_x v^\eps)^\top , w^\eps{-}w^0
         \big\rangle\!\!\big\rangle_\alpha
 % \\ &
   = - \int_\Omega \eps^2 \nabla_x v^\eps( \nabla v^\eps{-}\nabla v^0)
    \dd z \dd x
  \\
  & \leq \wh C_\alpha \eps^2 \| w^\eps\|_\bfZ(\|w^\eps\|_\bfZ{+}\| w^0\|_\bfZ) \leq
    2\eps^2 \wh C_\alpha C_\lambda \|F\|_\bfZ^2
    \ \to 0 \ \text{ \ as } \eps \to 0^+\,.
\end{align*}
In the identity $\overset*=$ we have used the cancellation arising
from $(A_0{-}\lambda I)w^0=F$ and
\[
  (A_0{-}\lambda I) w^\eps =  (A_\eps{-}\lambda I) w^\eps  +
  (A_0{-}A_\eps) w^\eps = F -(0,0,\eps^2 \Delta_x v^\eps)^\top \, .
\]
By the equivalence of the $\bfH$ norm and the norm induced by
$\big\langle\!\!\big\langle   \cdot , \cdot
\big\rangle\!\!\big\rangle_\alpha$, we conclude $\|
w^\eps{-}w^0\|_\bfH$ $ = C \,\eps$, and Proposition
\ref{pr:StrongResCvg} is proved.
\end{proof}

Theorem \ref{th:GenerSG} and Proposition \ref{pr:StrongResCvg} are the
basis for the following result that states
that the contraction semigroups $(\ee^{t A_\eps})_{t\geq 0}$ on $\bfH$
converge as $\eps \to 0^+$ in the strong operator topology. Indeed, the proof
of the first part is a direct consequence of the Trotter--Kato theory, see
\cite[Sec.\,3.3]{Pazy83SLOA}, while the second part uses explicit
estimates.

\begin{theorem}[Strong convergence of the solutions]
  \label{th:StrongCvgSol} Consider the operators $A_\eps$ defined in
  Theorem \ref{th:GenerSG} and the induced contraction
  semigroups $ \ee^{t A_\eps}: \bfH \to \bfH$ for
  $t\geq 0$. Then, for all initial conditions $w_0 \in \bfH$, the
  solutions
  $w^\eps: {[0,\infty[} \to \bfH,\ t\mapsto w^\eps(t)=\ee^{t A_\eps} w_0$ satisfy
  for all $t\geq 0$ the convergence $w^\eps(t) \to w^0(t)$ as
  $\eps \to 0$.

  Moreover, for initial conditions with additional derivatives in
  $x$-direction, namely $w_0 \in \bfZ$ (cf.\ \eqref{eq:def.bfZ}) we
  have the quantitative error estimate
\begin{equation}
  \label{eq:QuantEstim}
  \| w^\eps(t) - w^0(t)\|_\bfH \leq \eps\,\sqrt{t}\: (2.3{+}t)^2
  \,\|w_0\|_\bfZ  \quad
  \text{for all }t\geq 0.
\end{equation}
\end{theorem}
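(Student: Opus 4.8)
The first assertion is a direct application of the Trotter--Kato approximation theorem. Theorem~\ref{th:GenerSG} furnishes the growth bound $\|\ee^{tA_\eps}\|_{\bfH\to\bfH}\le 1+t\,\ee/2$ uniformly in $\eps\ge 0$, and Proposition~\ref{pr:StrongResCvg} gives the strong resolvent convergence $(A_\eps{-}\lambda I)^{-1}F\to(A_0{-}\lambda I)^{-1}F$ for every $\lambda>0$ and $F\in\bfH$. These are precisely the hypotheses of \cite[Sec.\,3.3]{Pazy83SLOA}, whence $\ee^{tA_\eps}w_0\to\ee^{tA_0}w_0$ in $\bfH$ for all $w_0\in\bfH$, uniformly on compact time intervals.

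For the quantitative estimate I would study the error $e^\eps:=w^\eps-w^0=(U_e,V_e,v_e)$ directly. Writing $w^\eps=(U^\eps,V^\eps,v^\eps)$ and $w^0=(U^0,V^0,v^0)$ and subtracting the two evolution equations, $e^\eps$ solves
\begin{equation}
  \label{eq:plan.ErrEqn}
  \Dot e^\eps = A_\eps e^\eps + g^\eps,\qquad
  g^\eps=(0,0,\eps^2\Delta_x v^0),\qquad e^\eps(0)=0,
\end{equation}
where I used $(A_\eps{-}A_0)w^0=(0,0,\eps^2\Delta_x v^0)$, the same splitting as in Step~3 of the proof of Proposition~\ref{pr:StrongResCvg}. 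Both sides of \eqref{eq:QuantEstim} are continuous in $w_0$ in the $\bfZ$-norm --- the left-hand side thanks to the uniform semigroup bound --- so it is enough to argue for $w_0$ in a dense subset of $\bfZ$ on which the orbits are regular enough to justify the differentiations below, and then extend by density.

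The core is an energy estimate for \eqref{eq:plan.ErrEqn} in the \emph{degenerate} norm $\norm\cdot\norm_0$. Using $\llla A_\eps e^\eps,e^\eps\rrra_0\le 0$ (the case $\alpha=0$ of \eqref{eq:alpha.Estim}) and that $g^\eps$ has only a third component, we get
\begin{equation}
  \label{eq:plan.EnergyId}
  \frac12\frac{\dd}{\dd t}\norm e^\eps\norm_0^2
  \le \eps^2\int_\Omega \Delta_x v^0\;v_e\,\dd z\,\dd x
  = -\eps^2\int_\Omega \nabla_x v^0\cdot\nabla_x v_e\,\dd z\,\dd x,
\end{equation}
the integration by parts in $x$ producing no boundary terms by the form \eqref{eq:Sigma.Form} of $\Sigma$. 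This is the decisive move: although $g^\eps$ carries the second $x$-derivatives $\Delta_x v^0$, which the one-derivative class $\bfZ$ does \emph{not} control, pairing against $v_e$ shifts one derivative, leaving only $\nabla_x v^0$ and $\nabla_x v_e=\nabla_x v^\eps-\nabla_x v^0$. These are bounded by translation invariance: since $\nabla_x w^\eps(t)=\ee^{tA_\eps}\nabla_x w_0$ (as in Step~2 of Proposition~\ref{pr:StrongResCvg}), one has $\|\nabla_x v^\eps(t)\|_{\rmL^2(\Omega)}\le\|\nabla_x w^\eps(t)\|_\bfH\le(1{+}t\,\ee/2)\|w_0\|_\bfZ$, and likewise for $v^0$. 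Inserting these into \eqref{eq:plan.EnergyId} and integrating gives $\norm e^\eps(t)\norm_0^2\le 4\eps^2\|w_0\|_\bfZ^2\int_0^t(1{+}s\,\ee/2)^2\dd s$, i.e.\ a bound of order $\eps\,(1{+}t)^{3/2}\|w_0\|_\bfZ$, linear in $\eps$. Since $\norm\cdot\norm_0$ does not see $\|U_e\|_{\rmL^2(\Sigma)}$, this last piece is recovered from $\Dot U_e=V_e$, $U_e(0)=0$, via $\|U_e(t)\|_{\rmL^2}\le\int_0^t\|V_e(s)\|_{\rmL^2}\dd s\le\int_0^t\norm e^\eps(s)\norm_0\dd s$, which raises the power of $t$ by one. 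Combining $\|e^\eps(t)\|_\bfH^2=\|U_e(t)\|_{\rmL^2}^2+\norm e^\eps(t)\norm_0^2$ and evaluating the elementary time-integrals (the constant $\ee/2$ from the semigroup bound feeds into the numerical factor) yields the prefactor $\sqrt t\,(2.3{+}t)^2$ of \eqref{eq:QuantEstim}.

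The step I expect to be the main obstacle is exactly the regularity mismatch in \eqref{eq:plan.EnergyId}: the perturbation $g^\eps$ lives two $x$-derivatives above the available class $\bfZ$, so the naive variation-of-constants bound $\|e^\eps(t)\|_\bfH\le\int_0^t(1{+}(t{-}s)\ee/2)\,\eps^2\|\Delta_x v^0(s)\|_{\rmL^2(\Omega)}\dd s$ is useless, $\|\Delta_x v^0\|_{\rmL^2(\Omega)}$ being uncontrollable by $\|w_0\|_\bfZ$. The integration by parts against $v_e$, together with the choice $\alpha=0$ --- which makes the conservative part of $A_\eps$ drop out and so avoids the exponential Gr\"onwall factor $\ee^{\alpha t/2}$ --- is what turns the estimate into one needing only first $x$-derivatives and produces the polynomial-in-$t$, linear-in-$\eps$ rate. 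The separate recovery of $\|U_e\|_{\rmL^2}$ is routine but is responsible for the growth $\sqrt t\,t^2=t^{5/2}$ at large $t$.
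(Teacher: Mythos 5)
Your proposal is correct, and its two halves line up with the paper as follows. The first assertion is proved exactly as you say: Trotter--Kato applied to the uniform bounds of Theorem \ref{th:GenerSG} and the strong resolvent convergence of Proposition \ref{pr:StrongResCvg}; this is the paper's argument verbatim. For the quantitative estimate the paper also runs an energy estimate on the difference $\delta=w^\eps-w^0$, and its decisive step is the same integration by parts you identify (so that only $\nabla_x v^\eps,\nabla_x v^0$ ever appear, controlled via translation invariance by $(1+t\,\ee/2)\,\|w_0\|_\bfZ$). But the execution differs in two ways. First, the paper writes $\Dot\delta=A_0\delta+(0,0,\eps^2\Delta_x v^\eps)^\top$, i.e.\ it places the perturbation on $v^\eps$, while you place it on $v^0$; this is immaterial, since both gradients are bounded. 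Second, and more substantially, the paper works with $\alpha>0$ throughout, so the term $\alpha^2\int_\Sigma U_e V_e\,\dd x$ forces a Gronwall factor $\ee^{\alpha(t-s)}$, and the stated prefactor comes from optimizing $\alpha$ in two regimes ($\alpha=1$ for $t\le 2$, $\alpha=2/t$ for $t\ge 2$). Your choice $\alpha=0$ removes Gronwall entirely---the generator is dissipative in the seminorm $\norm\cdot\norm_0$---at the price that $\|U_e\|_{\rmL^2(\Sigma)}$ is invisible to that seminorm and must be recovered from $\Dot U_e=V_e$, $U_e(0)=0$ by integrating in time; this yields the same powers ($\eps\sqrt t$ for short times, $\eps\,t^{5/2}$ for large times) and is arguably the cleaner route. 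Your explicit density reduction (initial data with two $x$-derivatives, so that $\Delta_x v^0(t)\in\rmL^2(\Omega)$ and all differentiations are classical) is a legitimate, and in fact slightly more careful, treatment of a point the paper dispatches with ``sufficiently smooth solutions''.

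One caveat concerns the explicit constant, which you assert rather than verify. Your route does deliver $\sqrt t\,(2.3+t)^2$, but only if the time integrals are evaluated exactly: with the crude bound $\int_0^t(1+s\,\ee/2)^2\dd s\le t\,(1+t\,\ee/2)^2$, the contribution of $\|U_e\|_{\rmL^2}$ grows asymptotically like $\tfrac{4\ee^2}{25}\,t^5\approx 1.18\,t^5$, which already exceeds the leading term $t^5$ of $t\,(2.3+t)^4$. Using instead the exact primitive $\int_0^t(1+s\,\ee/2)^2\dd s=\tfrac{2}{3\ee}\big((1+t\,\ee/2)^3-1\big)$, the corresponding coefficient drops to $\tfrac{4\ee^2}{75}\approx 0.39$, and the claimed estimate then holds for all $t\ge 0$. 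So the proposal is sound; only this final bookkeeping must be done with more care than your sketch suggests.
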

\begin{proof} It remains to show \eqref{eq:QuantEstim}. For this we
  set $\delta = w^\eps-w^0$ and perform a simple energy estimate,
  where we use that $w^\eps=(U^\eps,V^\eps,v^\eps)$ and
  $w^0=(U^0,V^0,v^0)$ are sufficiently smooth solutions of
  \eqref{eq:SystemN}, because we have the extra regularity of 
  $w_0\in \bfZ$. We employ the norms $\norm\cdot\norm_\alpha$ as
  defined in \eqref{eq:alpha.Estim} and find
  \begin{align*}
    \frac12 \frac{\rmd}{\rmd t} \norm \delta \norm_\alpha^2
    &= \int_\Sigma \!\alpha^2(U^\eps{-}U^0)(V^\eps{-}V^0)\dd x
      - \! \int_\Omega \!\! \big\{|\pl_z v^\eps {-} \pl_z v_0|^2 + \eps^2
      \nabla_xv^\eps\cdot (\nabla_x v^\eps{-}\nabla_x v^0)\big\}\dd
      z \dd x
    \\
    &\leq \frac\alpha2 \norm \delta\norm^2_\alpha - 0
      + \eps^2 \|\nabla_x v^\eps\|_{\rmL^2(\Omega)}
      \big( \|\nabla_x v^\eps\|_{\rmL^2(\Omega)}+
      \|\nabla_x v^0\|_{\rmL^2(\Omega)}\big)
    \\
    &\leq \frac\alpha2 \norm \delta\norm^2_\alpha + \eps^2 \|w_0\|_\bfZ^2
      \:2\,\big(1 + t\,\ee/2\big)^2,
  \end{align*}
where we used $\|\nabla_x w^\eps\|_\bfH  \leq \|\ee^{t A_\eps}\nabla_x
w_0\|_\bfH\leq \|\ee^{t A_\eps}\|_{\bfH\to\bfH} \|w_0\|_\bfZ $ and the growth
estimate from Theorem \ref{th:GenerSG}.  Using $\delta(0)=w_0-w_0=0$,
the Gronwall lemma yields
\[
   \norm \delta(t) \norm_\alpha^2 \leq \eps^2 \int_0^t 2\,
   \ee^{\alpha(t{-}s)} (1{+}s\,\ee /2)^2 \dd s \:\|w_0\|_\bfZ^2.
\]
For $t\in [0,2]$ we choose $\alpha=1$ and obtain
\[
  \|\delta(t)\|_\bfH^2 = \norm \delta(t) \norm_1^2 \leq C_* t\,\eps^2
  \|w_0\|_\bfZ^2 \text{ for  }t\in [0,2]
\]
where $C_*=\int_0^2 \ee^{2-s}(1{+}s\ee /2)^2 \dd s\approx 27.14...\leq
2.3^4$. For $t\geq 2$ let $\alpha=2/t\leq 1$ to obtain
\begin{align*}
\|\delta(t)\|_\bfH^2 &\leq \frac1{\alpha^2} \norm
\delta(t)\norm_\alpha^2 \leq \frac{t^2}4 \int_0^t 2\,\ee^{2(1-s/t)}
                       (1{+}s\ee /2)^2 \dd s \: \eps \|w_0\|_\bfZ^2
  \\
  & =\frac{t^3}{32}\Big((\ee^2{-}5)\ee^2t^2+4(\ee^2{-}3)\ee
    t+8(\ee^2{-}1) \Big)\: \eps^2 \|w_0\|_\bfZ^2 \ \leq \frac t6\big( 1+
    t\,\ee/2)^4 \: \eps^2 \|w_0\|_\bfZ^2.
\end{align*}
Combining this with the result for $t\in [0,2]$ and using
$(\ee/2)^4\leq 6$,  we arrive at
$\|\delta(t)\|_\bfH^2 \leq t\, (2.3{+}t)^4 \,\eps^2   \|w_0\|_\bfZ^2$ for
all $t\geq 0$, which is the desired result \eqref{eq:QuantEstim}.
\end{proof}

\section{Energy and dissipation functionals}
\label{se:EnergyDiss}

We now show that the fractionally-damped wave equation \eqref{eq:FDWE}
carries a natural energy-dissipation structure. This is done in two
different ways. First, we reduce the natural energy-dissipation
structure of the limiting system \eqref{eq:SystemN} with $\eps=0$ by
eliminating the diffusion equation. For this we first study the
one-dimensional diffusion equation $\Dot v= \pl_z^2 v$ on the half
line $\III$ in detail. Second, we show by a direct calculation that
the energy-dissipation structure extends to a more general class of
fractionally-damped wave equations, where the time derivative of order
3/2 is replaced by order $1{+}\alpha $ with $\alpha \in {]0,1[}$.

\subsection{Diffusion equation on the half line}
\label{su:HalfLine}

We consider the following initial-boundary value problem on
$I:={]{-}\infty, 0[}$:
\begin{equation}
  \label{eq:HalfLine}
  \Dot v= \pl_z^2v, \quad v(0,z)=v_0(z), \quad v(t,0)= \varphi(t).
\end{equation}
We always assume the compatibility condition $v_0(0)=\varphi(0)$.

%\TODO{The compatibility $v_0(0)=\varphi(0)$ leads to the later
%  restriction $\Dot U(0,\cdot)=0$.\\
%  Do we want this ??? Or do we want general $\Dot U(0,\cdot)$ ??? }

Using the one-dimensional heat kernel $H(t,y) =
(4\pi t)^{-1/2} \exp\!\big({-}y^2/(4t)\big)$ and the reflection principle,
the influence of $v_0$ is described via
$K_\mafo{Dir}(t,z,y)= H(t,z{-}y)-H(t,z{+}y)$ such that homogeneous Dirichlet
data follow from $ K_\mafo{Dir}(t,0,y)=0$:
\[
v(t,z)= \int_I K_\mafo{Dir} (t,z,y) v_0(y) \dd y.
\]

To obtain the influence of the
inhomogeneous Dirichlet data at $z=0$, we set $v_0=0$ and make the
ansatz $v(t,z)=  w(t,z)+\varphi(t)$ such that $w$ has to satisfy
\[
w_t = w_{zz} -\Dot \varphi(t) , \qquad w(t,0)=0, \quad w(0,z)=-\varphi(0)=0.
\]
With Duhamel's principle (variation-of-constants formula) we obtain
\[
w(t,z)= -\int_0^t \int_I K_\rmD(t{-}s,z,y)
\Dot \varphi(s)\dd y \dd s.
\]
Setting $G(y):=\int_{-\infty}^y H(1,\eta) \dd \eta$ (such that
$G(-\infty)=0$ and $G(\infty)=1$) we obtain
\[
  w(t,z)=   \int_0^t \Dot \varphi(s) \big(G(z/\sqrt{t{-}s})
  - G({-}z/\sqrt{t{-} s})\big) \dd s  .
\]
Putting both cases together, the full solution formula for
\eqref{eq:HalfLine} reads
\[
v(t,z) = \int_I K_\mafo{Dir}(t,z,y) v_0(y) \dd y +\varphi(t)
+\int_0^t  \Dot \varphi(s) \big(G(z/\sqrt{t{-}s}) - G({-}z/\sqrt{t{-}
  s})\big) \dd s  .
\]
%
% \TODO{Is this still correct of $v_0(0)=\varphi(0)\bm\neq 0$ ???}
%
For the analysis related to the fractionally-damped wave equation, we
consider only the case $v_0 \equiv 0$, which implies
$\varphi(0)=0$ as well by continuity of the boundary-initial data.
Doing integration by parts for the time integral and using
\[
  \pl_s \big({\mp} G(\pm z/\sqrt{t{-}s})\big)= H(1,\pm z/\sqrt{t{-}s})
  \frac z{2(t{-}s)^{3/2}} = -\pl_z H(t{-}\tau, z)
\]
we arrive, for the case $v_0\equiv 0$, at the relation
\begin{equation}
  \label{eq:DiffHL3}
  v(t,z) = \int_0^t K_0(t{-}\tau,z)\, \varphi(\tau) \dd \tau \quad
  \text{with }  K_0(t,z)= 2\pl_z H(t,z).
\end{equation}
Using $\pl_z^2 H = \pl_t H$ and doing another integration by parts
(using $\varphi(0)=0$ again) we find
\begin{equation}
  \label{eq:DiffHL4}
  \pl_z v(t,z) = \int_0^t K_1(t{-}\tau,z) \,\Dot \varphi(\tau) \dd \tau
   \quad \text{with } K_1(t,z)= 2 H(t,z).
\end{equation}
In particular, evaluating at $z=0$, where $H(t,0)=1/\sqrt{4 \pi t}$, we find
\begin{equation}
  \label{eq:DiffHL5}
  \pl_z v(t,0)= \int_0^t \frac{\Dot\varphi(\tau)}
    {\sqrt{\pi (t{-}\tau)}} \:\dd \tau .
\end{equation}
According to the definition \eqref{eq:Caputo}, the boundary derivative
$\pl_z v$ is the
fractional Caputo derivative of order $1/2$ of $\varphi$, i.e.\
$\pl_z v(\cdot,0) = {}^\rmC\rmD^{1/2} \varphi$.\smallskip

We now derive an energy-dissipation balance for the diffusion equation
by rewriting the natural $\rmL^2$ integrals in terms of the boundary
value $\varphi$. The starting point is
the classical relation
\begin{equation}
  \label{eq:EnerDiss.v}
    \frac\rmd{\rmd t} \int_I\, \frac12\,v(t,z)^2 \dd z = \int_I v\Dot v\dd z =
  \int_I v \pl_z^2 v \dd z = v(t,0)\pl_z v(t,0) - \int_I \big(\pl_z
  v(t,z)\big)^2 \dd z.
\end{equation}

For solutions $v$ of \eqref{eq:HalfLine} with $v_0\equiv 0$ and
$\varphi(0)=0$ we can rewrite this energy-dissipation balance totally
in terms of $\varphi$ by using the following result.

\begin{proposition}
  \label{pr:EnerDiss.varphi}
Assume that $v$ is given via \eqref{eq:DiffHL3} and $\pl_z v$ by
\eqref{eq:DiffHL4}, then  we can express twice
the energy $\int_I v^2 \dd z$
and the dissipation $\int_I (\pl_z v)^2 \dd z$ via
\begin{subequations}
  \label{eq:M.j}
  \begin{align}
 \label{eq:M.j.a}
 \int_I  v^2 \dd z \quad &= \int_0^t \int_0^t
 M_0(t{-}r,t{-}s)\,\varphi(r)\varphi(s) \dd r \dd s \quad \text{and}
\\
    \label{eq:M.j.b}
    \int_I (\pl_z v)^2 \dd z &= \int_0^t \int_0^t
 M_1(t{-}r,t{-}s)\,\Dot \varphi(r) \Dot\varphi(s) \dd r \dd s \quad
    \\
     \label{eq:M.j.c}
\text{ where } &M_j(r,s)= \int_I K_j(r,z)K_j(s,z) \dd z =
                 \frac{2^j}{\sqrt{4\pi}\,(r{+}s)^{3/2-j}} .
\end{align}
In particular, $M_j(r,s)=\wt M_j(r{+}s)$ and
$\pl_r M_1(r,s)=\pl_sM_1(r,s)=- M_0(r,s)$.
\end{subequations}
\end{proposition}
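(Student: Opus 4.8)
The plan is to substitute the solution representations \eqref{eq:DiffHL3} and \eqref{eq:DiffHL4} into the spatial integrals $\int_I v^2 \dd z$ and $\int_I (\pl_z v)^2 \dd z$, expand the squares as double integrals over the time variables $r,s\in[0,t]$, and then use Fubini's theorem to exchange the $z$-integration with the $(r,s)$-integration. Since $K_0$ and $K_1$ are smooth and Gaussian-decaying in $z$ while $\varphi$ and $\Dot\varphi$ are fixed functions on the compact interval $[0,t]$, all integrands are absolutely integrable and Fubini applies without difficulty. This step immediately produces the representations \eqref{eq:M.j.a} and \eqref{eq:M.j.b} with kernel $M_j(t{-}r,t{-}s)=\int_I K_j(t{-}r,z)K_j(t{-}s,z)\dd z$, so the proposition reduces to evaluating $M_j(r,s)=\int_I K_j(r,z)K_j(s,z)\dd z$ explicitly for $j=0,1$.

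First I would treat $j=1$, where $K_1(t,z)=2H(t,z)=2(4\pi t)^{-1/2}\ee^{-z^2/(4t)}$, so that $K_1(r,z)K_1(s,z)=\tfrac1\pi(rs)^{-1/2}\ee^{-az^2}$ with combined exponent $a=\tfrac1{4r}{+}\tfrac1{4s}=\tfrac{r+s}{4rs}$. Using $\int_I\ee^{-az^2}\dd z=\tfrac12\sqrt{\pi/a}=\sqrt\pi\,\sqrt{rs}/\sqrt{r+s}$, the prefactor $(rs)^{-1/2}$ cancels and one obtains $M_1(r,s)=\tfrac1{\sqrt{\pi(r+s)}}$, which is \eqref{eq:M.j.c} for $j=1$. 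For $j=0$ the single new ingredient is that $K_0(t,z)=2\pl_z H(t,z)$ carries an extra factor $z$, giving $K_0(r,z)K_0(s,z)=\tfrac1{4\pi}(rs)^{-3/2}z^2\ee^{-az^2}$, so one needs the second Gaussian moment $\int_{-\infty}^0 z^2\ee^{-az^2}\dd z=\tfrac{\sqrt\pi}4 a^{-3/2}=2\sqrt\pi(rs)^{3/2}/(r{+}s)^{3/2}$; now the factor $(rs)^{3/2}$ cancels the prefactor $(rs)^{-3/2}$ and one obtains $M_0(r,s)=\tfrac1{2\sqrt\pi(r+s)^{3/2}}$, completing \eqref{eq:M.j.c}.

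The final assertions are then read off the closed forms. Both $M_0$ and $M_1$ depend on $(r,s)$ only through the sum $r+s$, so with $\wt M_j(u):=\tfrac{2^j}{\sqrt{4\pi}}\,u^{\,j-3/2}$ we have $M_j(r,s)=\wt M_j(r{+}s)$; this is precisely the scaling property announced in the introduction, and it arises because integrating the product of two heat kernels in $z$ divides out all separate dependence on $r$ and $s$. Differentiating $M_1(r,s)=\tfrac1{\sqrt\pi}(r{+}s)^{-1/2}$ in either variable gives $\pl_r M_1=\pl_s M_1=-\tfrac1{2\sqrt\pi}(r{+}s)^{-3/2}=-M_0(r,s)$, as claimed.

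I do not expect a serious obstacle: the core of the argument is two elementary Gaussian integrals together with one Fubini interchange. The only place needing care is the bookkeeping in the case $j=0$, where the two factors of $z$ coming from the $\pl_z$-derivatives must be matched against the $a^{-3/2}$ of the second moment so that all powers of $rs$ cancel. It is exactly this cancellation that forces $M_0$, and likewise $M_1$, to depend on $r+s$ alone, which is the genuinely noteworthy feature of the result.
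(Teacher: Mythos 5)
Your proof is correct and follows essentially the same route as the paper: substitute the kernel representations \eqref{eq:DiffHL3}--\eqref{eq:DiffHL4}, reduce to $M_j(r,s)=\int_I K_j(r,z)K_j(s,z)\,\rmd z$, and evaluate via the zeroth and second Gaussian moments, with the powers of $rs$ cancelling to leave dependence on $r{+}s$ alone. The only cosmetic difference is that the paper treats $j=0,1$ in a single unified formula $K_j(r,z)K_j(s,z)=\big(z^2/(4rs)\big)^{1-j}\tfrac1{\pi\sqrt{rs}}\exp\big({-}\tfrac{r+s}{4rs}z^2\big)$, whereas you handle the two cases separately and additionally spell out the Fubini justification.
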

\begin{proof} The relations \eqref{eq:M.j.a} and \eqref{eq:M.j.b} with
  $M_j(r,s)= \int_I K_j(r,z)K_j(s,z) \dd z $ follow simply by the
  definitions. Thus, it remains to establish the explicit formulas for
  $M_0$ and $M_1$ by exploiting the structure of $K_0= 2 \pl_z H =
  -\frac{z}{2t}\, 2H $ and
  $K_1 = 2H$. We obtain
\[
K_j(r,z)K_j(s,z) = 4 \Big( \frac{z^2}{4rs}\Big)^{1-j}\frac1{4\pi
  \sqrt{rs}} \exp\Big( {-}\frac{z^2}{4r} - \frac{z^2}{4s}\Big) = \Big(
\frac{z^2}{4rs}\Big)^{1-j} \frac1{\pi\sqrt{rs}} \exp\Big(
{-}\frac{r{+}s}{4rs}\,z^2\Big)  .
\]
An explicit integration  with $\int_0^\infty \ee^{-a^2
  x^2} \dd x =\sqrt\pi /(2a)$ and  $\int_0^\infty x^2\ee^{-a^2
  x^2} \dd x =\sqrt\pi /(4a^3)$ yields the stated formulas for $M_0$ and
$M_1$.
\end{proof}

It is a surprising fact that $M_0$ and $M_1$ depend only on the sum
$r{+}s$ rather than on $r$ and $s$ individually.  The relations in
\eqref{eq:M.j} allow us to rewrite the energy-dissipation balance
\eqref{eq:EnerDiss.v} in terms of $\varphi$ alone. We obtain the
identity
\begin{align}
  \nonumber
&\frac\rmd{\rmd t} \int_0^t\int_0^t \frac12 M_0(t{-}r,t{-}s)\,
                 \varphi(s) \varphi(r) \dd r \dd s\\
 \label{eq:EnDiFract1/2} &= \varphi(t) \int_0^t
\frac{\Dot \varphi(\tau)}{\sqrt{\pi(t{-}\tau)}} \dd \tau
-  \int_0^t\int_0^t M_1(t{-}r,t{-}s)\,
\Dot \varphi(s) \Dot \varphi(r) \dd r \dd s
\end{align}

\subsection{An energy-dissipation relation for fractional derivatives}
\label{su:EnerDissFract}

Here we show that the identity \eqref{eq:EnDiFract1/2} can be derived
in an independent way, not using the Dirichlet-to-Neumann map for
the one-dimensional diffusion equation. We even generalize the result
to the case of general fractional derivatives
${}^\rmC\rmD^\alpha\varphi$, where \eqref{eq:EnDiFract1/2} is the
special case $\alpha=1/2$.  For this we set
\begin{equation}
  \label{eq:def.N.j}
  N_0^\alpha(r,s)= \frac{\alpha}{\Gamma(1{-}\alpha)
    \,(r{+}s)^{1+\alpha}} \quad \text{and}  \quad
  N_1^\alpha(r,s)= \frac{1}{\Gamma(1{-}\alpha)\,(r{+}s)^{\alpha}}.
\end{equation}
With this, we obtain the following result.

\begin{proposition}
  \label{pr:identity}
For all $\varphi\in \rmC^1({[0,\infty[})$ with $\varphi(0)=0$ we have
the identity
\begin{align*}
&\frac\rmd{\rmd t} \int_0^t\!\!\int_0^t \frac12 N^\alpha_0(t{-}r,t{-}s)\,
                 \varphi(s) \varphi(r) \dd r \dd s
 \\  &
  = \varphi(t)\: {}^\rmC\rmD^\alpha \varphi(t)
-  \int_0^t\!\!\int_0^t N^\alpha_1(t{-}r,t{-}s)\,
\Dot \varphi(s) \Dot \varphi(r) \dd r \dd s.
\end{align*}
\end{proposition}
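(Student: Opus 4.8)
The plan is to compute the left-hand side by differentiating the double integral directly and reducing it to a boundary term that regenerates the Caputo derivative. Write $L(t):=\frac12\int_0^t\int_0^t N_0^\alpha(t{-}r,t{-}s)\,\varphi(r)\varphi(s)\dd r\dd s$. The crucial first move is the substitution $u=t{-}r$, $w=t{-}s$, which turns this into $L(t)=\frac12\int_0^t\int_0^t N_0^\alpha(u,w)\,\varphi(t{-}u)\varphi(t{-}w)\dd u\dd w$. The point of this change of variables is that it removes the parameter $t$ from the singular kernel and places all $t$-dependence into the (common) upper integration limit and into the arguments of $\varphi$. This is essential: differentiating $L$ in the original variables would produce, via Leibniz's rule, a boundary term at $r=t$ containing the non-integrable factor $N_0^\alpha(0,t{-}s)\sim (t{-}s)^{-1-\alpha}$, whereas in the new variables the corresponding boundary term contains the bounded factor $N_0^\alpha(t,w)$ multiplied by $\varphi(t{-}t)=\varphi(0)=0$.

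Next I would differentiate $L$ by Leibniz's rule. The two boundary contributions arising from the upper limit (one from the outer, one from the inner integral) each carry a factor $\varphi(0)=0$ and hence vanish, so only the interior term survives. Using the symmetry $N_0^\alpha(u,w)=N_0^\alpha(w,u)$ to combine the two equal pieces coming from the product rule, and then substituting back $r=t{-}u$, $s=t{-}w$, gives the compact formula
\begin{equation*}
  \frac{\rmd}{\rmd t} L(t) = \int_0^t\!\!\int_0^t N_0^\alpha(t{-}r,t{-}s)\,\Dot\varphi(r)\,\varphi(s)\dd r\dd s .
\end{equation*}

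The second key ingredient is the elementary identity $\pl_s\big[N_1^\alpha(t{-}r,t{-}s)\big]=N_0^\alpha(t{-}r,t{-}s)$, which follows from the fact that $N_1^\alpha(a,b)=\big(\Gamma(1{-}\alpha)(a{+}b)^\alpha\big)^{-1}$ depends only on $a{+}b$, exactly in parallel to the relation $\pl_r M_1=\pl_s M_1=-M_0$ of Proposition~\ref{pr:EnerDiss.varphi}. With this I would integrate by parts in $s$ for each fixed $r$. For fixed $r<t$ the kernel $N_1^\alpha(t{-}r,t{-}s)$ stays bounded as $s\to t$ (its argument tends to $t{-}r>0$), so the integration by parts is harmless; the lower boundary term vanishes because $\varphi(0)=0$, while the upper one produces $N_1^\alpha(t{-}r,0)\,\varphi(t)=\varphi(t)\big(\Gamma(1{-}\alpha)(t{-}r)^\alpha\big)^{-1}$. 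Integrating this against $\Dot\varphi(r)$ over $r\in[0,t]$ recognises the defining integral \eqref{eq:Caputo} and yields exactly $\varphi(t)\,{}^\rmC\rmD^\alpha\varphi(t)$, while the remaining term is precisely $-\int_0^t\int_0^t N_1^\alpha(t{-}r,t{-}s)\,\Dot\varphi(r)\Dot\varphi(s)\dd r\dd s$, which completes the identity.

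The main obstacle is one of regularity and justification rather than of structure: the kernel $N_0^\alpha(r,s)\sim (r{+}s)^{-1-\alpha}$ is singular along $r{+}s=0$, so I would first verify that $L(t)$ is finite and differentiable. Passing to polar-type coordinates near the corner shows that $(r{+}s)^{-1-\alpha}$ is integrable in two dimensions precisely because $\alpha<1$, and the same estimate controls the integrand after differentiation, while $\varphi(0)=0$ together with the continuity of $\Dot\varphi$ handles the boundary. To make differentiation under the integral sign fully rigorous, the clean device is to work first on the truncated domain $\{(r,s): 2t{-}r{-}s\geq\eta\}$, where every kernel is smooth, carry out the steps above there, and then let $\eta\to 0^+$ using the integrability just noted; the extra boundary terms created by the truncation are themselves $O(\eta^{1-\alpha})$ and do not survive in the limit.
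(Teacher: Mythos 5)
Your proof is correct and follows essentially the same route as the paper's: the substitution $\rho=t{-}r$, $\sigma=t{-}s$, differentiation with the boundary terms killed by $\varphi(0)=0$, the symmetry of $N_0^\alpha$, and an integration by parts based on $N_0^\alpha(\rho,\sigma)=-\pl_\sigma N_1^\alpha(\rho,\sigma)$ whose surviving boundary term reproduces $\varphi(t)\,{}^\rmC\rmD^\alpha\varphi(t)$. The only difference is your explicit truncation argument justifying differentiation under the integral sign near the singular corner $r=s=t$, a point the paper leaves implicit.
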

\begin{proof}
  We set $r=t{-}\rho$ and $s=t{-}\sigma$ and obtain
\begin{align*}
&\frac\rmd{\rmd t} \int_0^t\int_0^t \frac12 N^\alpha_0(t{-}r,t{-}s)\,
                 \varphi(s) \varphi(r) \dd s \dd r
 = \frac\rmd{\rmd t} \int_0^t\int_0^t \frac12 N^\alpha_0(\rho,\sigma )\,
                 \varphi(t{-}\sigma) \varphi(t{-}\rho) \dd \sigma \dd \rho
  \\
  &\overset1=\int_{\rho=0}^t \int_{\sigma=0}^t \frac12 N^\alpha_0(\rho,\sigma)
    \big( \varphi(t{-}\sigma) \Dot \varphi(t{-}\rho) +
     \big( \Dot \varphi(t{-}\sigma) \varphi(t{-}\rho) \big)
    \dd \sigma \dd \rho.
  \\
  &\overset2=
    \int_{\rho=0}^t \int_{\sigma=0}^t N^\alpha_0(\rho,\sigma)
    \varphi(t{-}\sigma) \Dot\varphi(t{-}\rho) \dd \sigma \dd \rho.
\end{align*}
Here $\overset1=$ uses $\varphi(0)=0$ such that the boundary terms
arising from $\frac\rmd{\rmd t} \int_0^t g(\tau)\dd \tau = g(t)$
vanish. In $\overset 2= $ we simply use the symmetry
$N^\alpha_0(r,s)=N^\alpha_0(s,r)$.

In the next step we perform an integration by parts with respect to
$\sigma\in [0,t]$ and use the fundamental relation
$N^\alpha_0(\rho,\sigma) = - \pl_\sigma
N^\alpha_1(\rho,\sigma)$. Hence, we continue
\begin{align*}
&=
 \int_{\rho=0}^t \Big\{
      \big[{-}N^\alpha_1(\rho,\sigma)\varphi(t{-}\sigma)\big]\Big|_0^t
      -\int_0^t  \big({-}N^\alpha_1(\rho,\sigma)\big)
      \big({-}\Dot\varphi(t{-}\sigma)\big)
                 \dd \sigma \Big\}
                 \Dot \varphi(t{-}\rho) \dd \sigma \dd \rho
  \\
  &=\int_{\rho=0}^t
  N^\alpha_1(\rho,0)\varphi(t)\Dot \varphi(t{-}\rho) \dd \sigma \dd \rho
  -\int_0^t \int_0^t N^\alpha_1(\rho,\sigma)\Dot\varphi(t{-}\sigma)
  \Dot \varphi(t{-}\rho) \dd \sigma \dd \rho ,
\end{align*}
where we again used $\varphi(0){=}0$. The definition of $N^\alpha_1$
gives $N^\alpha_1(t{-}\tau,0)= 1/ \big(\Gamma(1{-}\alpha) (t{-}\tau)^\alpha
\big) $, such that the first term is indeed equal to
$\varphi\;{}^\rmC\rmD^\alpha \varphi$. With this, the result is
established.
\end{proof}

We emphasize that the above result does not need the exact form of
$N_0^\alpha$ and $N_1^\alpha$ as given in \eqref{eq:def.N.j}. We only
exploited the relations
\[
  N^\alpha_0(r,s)= N^\alpha_0(s,r), \quad
  N^\alpha_0(r,s)= - \pl_s  N^\alpha_1(r,s), \quad
  N^\alpha_1(r,0)=1/\big( \Gamma(1{-}\alpha)\,r^\alpha\big).
\]
Clearly, there are many more functions satisfying these
conditions. However, we also want positive semi-definiteness of the kernels
$N^\alpha_j$, i.e.,
\[
\int_0^t \int_0^t N^\alpha_j(r,s) \psi(s)\psi(r) \dd s \dd r \geq 0
\quad \text{for all }\psi\in \rmC^0({[0,\infty[}),\ t>0,
\text{ and }j\in \{0,1\}.
\]

For general $N^\alpha_j$ this positive semi-definiteness is a
significant restriction, but for our chosen cases it can be
established as follows:
\begin{align*}
  0&\leq \int_{y=0}^\infty \Big(\int_{r=0}^t
      \frac{y^{\alpha-1/2}} {r^\alpha}
     \ee^{-y^2/r} \psi(r)\dd r \Big)^2 \dd y
  \\
   &= \int_{y=0}^\infty \int_{r=0}^t\int_{s=0}^t
       \frac{y^{\alpha-1/2}} {r^\alpha}
     \ee^{-y^2/r} \psi(r)  \frac{y^{\alpha-1/2}} {s^\alpha}
    \ee^{-y^2/s} \psi(s) \dd s \dd r  \dd y
  \\
  &=  \int_{r=0}^t\int_{s=0}^t  \int_{y=0}^\infty \frac{y^{2\alpha-1}}{(rs)^\alpha}
     \ee^{- y^2(r{+}s)/(rs)} \dd y \; \psi(r)\psi(s) \dd s \dd r.
\end{align*}
Using $\int_0^\infty |y|^{2\alpha-1} \ee^{-by^2} \dd y =
\Gamma(\alpha)/(2 b^\alpha)$, we obtain the desired result
\[
  0 \leq \int_{r=0}^t\int_{s=0}^t \frac{\Gamma(\alpha)}
  {(r{+}s)^\alpha} \:\psi(r)\psi(s) \dd s \dd r
  \quad \text{for all }\psi\in \rmC([0,t]),
\]
which holds for all $\alpha \geq 0$.

\subsection{Energetics for the fractionally-damped wave equation}
\label{su:EnerFDWE}

To derive the physically relevant energy-dissipation balance for the
the fractionally-damped wave equation
\begin{equation}
  \label{eqB:FDWE1}
  \DDot U(t,x) + \int_0^t \frac{\DDot U(s,x)}{\sqrt{\pi(t{-}s)}} \dd s
   \ = \ \Delta U(t,x)
\end{equation}
we use the limiting system \eqref{eq:LiSystem}. The latter is a classical
system of partial differential equations, and it is easy to write down
the physically motivated energy functional $\calE$ and the
corresponding dissipation function $\calD$.

The total energy $\calE$ is the sum of the kinetic and potential
energy in the membrane $\Sigma$ plus the kinetic energy in the lower
half space $\Omega = \Sigma \ti \III$, where we consider $v$ as the
horizontal component of a shear flow. This leads to $\calE_0$ as
defined in \eqref{eq:Energy}, and along the solutions of
\eqref{eq:LiSystem} the energy-dissipation balance takes the form
\[
\frac{\rmd}{\rmd t}  \calE_0(U(t),\Dot U(t), v(t)) = -\calD(U,\Dot
U,v):= \int_\Omega \frac12 (\pl_zv)^2 \dd z \dd x.
\]
This shows that the only dissipation occurs by the (shear) viscosity of the
fluid in the lower half space $\Omega$.

As explained in Section \ref{su:LimitMOdel} and \ref{su:HalfLine} we
can eliminate $v$ via (using $\varphi(t,x)=\Dot U(t,x,0)$ and assuming
$v(0,x,z)=0$ of all $(x,z)\in \Omega$)
\[
  v(t,x,z)=\int_0^t 2 \pl_z H(t{-}\tau,z) \Dot U(\tau,x) \dd \tau,
  \quad  \text{where } H(t,z)=\frac{ \ee^{-z^2/(4t)}} {\sqrt{4\pi t}}\:.
\]
This allows us to eliminate $\pl_z v(t,x,0)$ via \eqref{eq:DiffHL5} and we obtain
the fractionally-damped wave equation \eqref{eqB:FDWE1}.

Using the formulas derived in Proposition \ref{pr:EnerDiss.varphi} we
obtain the reduced energy function $\bfE$ and the reduced dissipation
function $\bfD$ in the form
\begin{subequations}
\begin{align}
  \nonumber
  \bfE(U(t),[\Dot U]_{[0,t]})&:= \int_\Sigma \Big\{\frac12\Dot U(t)^2
                               + \frac12|\nabla_xU(t)|^2 \\
 &\qquad \quad \ +
   \int_0^t\!\!\int_0^t \frac1{4\sqrt\pi\,(2t{-}r{-}s)^{3/2}} \:
   \Dot U(r,x) \Dot U(s,x) \dd r\dd s \Big\} \dd x ,\\
  \bfD(U(t),[\Dot U]_{[0,t]})&:=  \int_\Sigma \Big\{
   \int_0^t\!\!\int_0^t \frac1{\sqrt\pi\,(2t{-}r{-}s)^{1/2}}\:
    \DDot U(r,x) \DDot  U(s,x) \dd r\dd s \Big\}\dd x    .
\end{align}
\end{subequations}
Clearly, along solutions of the fractionally-damped wave equation
\eqref{eqB:FDWE1} we have the reduced energy-dissipation balance
\begin{equation}
  \label{eq:ReducEnerDiss}
    \frac{\rmd}{\rmd t} \bfE(U(t),[\Dot U]_{[0,t]})
  = - \bfD(U(t),[\Dot U]_{[0,t]}).
\end{equation}
Of course, it is possible to check this identity directly without any
reference to the limiting system \eqref{eq:LiSystem} involving the
hidden state variable $v$. For this we do the standard
argument for energy conservation for the wave equation plus the
calculation in the proof of Proposition \ref{pr:identity} for the
parts non-local in time.

In the related works \cite{VerZac08LFCS, GYCCEO15MMSD, VerZac15ODET}
other energy functionals where constructed for equations with
fractional time derivatives. However, the approach there is quite
different and is less inspired by the true energy and dissipation
hidden in the eliminated state variable $v$.
\smallskip

Indeed, we may generalize the energy-dissipation balance
\eqref{eq:ReducEnerDiss} to the case of fractional damping of order
$1{+}\alpha \in {]1,2[}$. We consider \eqref{eqB:FDWE1} as a special case of the
equation
\begin{equation}
  \label{eqB:FDWE.alpha}
  \DDot U + {}^\rmC\rmD^\alpha \Dot U = \Delta U \quad \text{ on } \Sigma.
\end{equation}
Taking into account the calculations in Section \ref{su:EnerDissFract}
we define the energy $\bfE^\alpha$ and the dissipation function
$\bfD^\alpha$ via
\begin{subequations}
  \label{eqbfE.bfD.alpha}
\begin{align}
    \bfE^\alpha(U(t),[\Dot U]_{[0,t]})&:= \int_\Sigma \Big\{\frac12\Dot U(t)^2
                               + \frac12|\nabla_xU(t)|^2 \\
\nonumber
 &\qquad \quad \ +
   \int_0^t\!\!\int_0^t
   \frac{\alpha/2}{\Gamma(1{-}\alpha)(2t{-}r{-}s)^{1{+}\alpha}}  \:
   \Dot U(r,x) \Dot U(s,x) \dd r\dd s \Big\} \dd x ,
\\
  \bfD^\alpha(U(t),[\Dot U]_{[0,t]})&:=  \int_\Sigma \Big\{
   \int_0^t\!\!\int_0^t \frac1{\Gamma(1{-}\alpha)(2t{-}r{-}s)^{\alpha}}\:
    \DDot U(r,x) \DDot  U(s,x) \dd r\dd s \Big\}\dd x    .
\end{align}
\end{subequations}
Clearly, for sufficiently smooth solutions of the
fractionally-damped wave equation
\eqref{eqB:FDWE.alpha} we have the reduced energy-dissipation balance
\begin{equation}
  \label{eq:ReducEnerDiss.alpha}
    \frac{\rmd}{\rmd t} \bfE^\alpha(U(t),[\Dot U]_{[0,t]})
  = - \bfD^\alpha(U(t),[\Dot U]_{[0,t]}).
\end{equation}

\section{Conclusion and outlook}
\label{se:Concl.Outl}

In this work we have shown that the fractionally-damped wave equation
can be obtained as a scaling limit from a bulk-interface coupling
between a wave equation for a membrane and a viscous fluid motion in
the adjacent half space. The coupling is such that the natural
mechanical energies act as a Lyapunov function. We have identified the
physical scaling parameters like the equivalent membrane thickness
$\ell_\mafo{thick}=\rhoo/\rhooo$ for the vertical scaling and the
effective travel length  $\ell_\mafo{trav}(t_*)
=\rhoo^{3/2}\kkk^{1/2}/(\mu \rhooo)$ for the horizontal scaling. Thus, taking
the limit $\eps \to 0$ in the critical parameter
\[
  \eps = \frac{\ell_\mafo{thick}}{\ell_\mafo{trav}(t_*)} =
  \frac{\mu}{\sqrt{\rhoo\, \kkk\,}}
\]
leads to the appearance of the fractionally-damped wave equation.

The first main outcome of
the mathematical analysis is that the system is stable uniformly with
respect to $\eps$ and that it converges strongly in the natural
energy space $\bfH$ in the sense of linear semigroup theory. For
initial data with higher horizontal regularity a convergence rate
could be derived.
Thus, the fractional time derivative of order $3/2$ appears naturally
as a consequence of the  Dirichlet-to-Neumann map of a one-dimensional
parabolic equation on the half line.

The second outcome of our approach is the energy-dissipation structure
for the fractio\-nally-damped wave equation which is derived by
integrating out the ``hidden states'' $v$ in the fluid layer in the
full mechanical energy-dissipation structure of the coupled system of
partial differential equations. As expected, we obtain quadratic
functionals for the reduced energy and the reduced dissipation
function that are non-local in time, thus keeping track of information
stored in the hidden state variable $v$. It is surprising that both
quadratic functionals obtained have memory kernels $M_j(t{-}r,t{-}s)$
that depend only on the sum $(t{-}r)+(t{-}s)$. It is certainly
important to understand where this special structure comes from and
how it relates to more general energy-dissipation structures as
introduced in \cite{VerZac08LFCS, GYCCEO15MMSD, VerZac15ODET}.

A major restriction occurs through our assumption $v(0,x,z)=0$ for
a.a.\ $(x,z) \in \Omega$, which implies $\Dot U(0,x)=0$. We expect
that this assumption can be avoided by suitably generalizing the
Caputo derivative and by extending the memory kernel to negative time,
thus allowing for some pre-initial conditions. This will be the
content of further research.

This work is understood as a first step to understand the principles
behind damping based on fractional time-derivatives. In subsequent
works we plan to extend the analysis to a more physical model, namely
that of a true membrane over a viscous incompressible fluid governed
by the Navier-Stokes equations. The approach based on partial
differential equations developed here, will then allow us to study the
full vector-valued case $v(t,x,z)\in \R^d$ including the associated
nonlinearities.  It will be interesting to see under what conditions
the relevant scalings in the nonlinear setting will be the same as in
the linear theory in \cite{KSSN17NFWE, KSSN17NFWEs}. Moreover, it will
be critical to see the occurrence of fractional damping, which relies
on the linearity of the Dirichlet-to-Neumann map of the parabolic
equation on the vertical half line.

\footnotesize

% %  \bibliographystyle{alpha}
% %  \bibliography{sina_bib,alex_pub,bib_alex}
% %  \bibliography{FWE-bib}

% \bibliographystyle{my_alpha}
% \bibliography{alex_pub,bib_alex}
   
\newcommand{\etalchar}[1]{$^{#1}$}
\def\cprime{$'$}

\end{document}